\newtheorem{lem}{Lemma}[section]
\newtheorem{teo}[lem]{Theorem}
\newtheorem{pro}[lem]{Proposition}
\newtheorem{cor}[lem]{Corollary}
\newtheorem*{con*}{Conjecture}
\newtheorem{Conj}{Conjecture}
\newtheorem{Question}[Conj]{Question}
\theoremstyle{definition}
\newtheorem{exa}[lem]{Example}
\theoremstyle{remark}
\newtheorem{rem}[lem]{Remark}
\newcommand{\argu}{\hbox to 7truept{\hrulefill}}
\DeclareMathOperator{\im}{Im}
\DeclareMathOperator{\rg}{rg}
\newcommand{\myeq}[1]{\ensuremath{\stackrel{\text{#1}}{=}}}
\newcommand{\myge}[1]{\ensuremath{\stackrel{\text{#1}}{\geqslant}}}
\newcommand {\hrg}{{\rm Hrg}}
\newcommand {\nhrg}{{\rm Hrg^{\unlhd}}}
\newcommand{\Z}{\mathbb{Z}}
\newcommand{\F}{\mathbb{F}}
\newcommand{\N}{\mathbb{N}}
\newcommand{\CC}{\mathbb{C}}
\newcommand{\Q}{\mathbb{Q}}
\renewcommand{\epsilon}{\varepsilon}
\newcounter{steffencomments}
\newcounter{andreicomments}
 \date{\today}
\begin{document}
\title[Invariants of just infinite groups] {Asymptotic invariants of residually finite just infinite groups
\newline \newline
\rm{Dedicated to the memory of Nikolay A. Vavilov}}
\author{Andrei Jaikin-Zapirain}
 \address{Departamento de Matem\'aticas, Universidad Aut\'onoma de Madrid \and  Instituto de Ciencias Matem\'aticas, CSIC-UAM-UC3M-UCM}
\email{andrei.jaikin@icmat.es}

\author{Steffen Kionke}
 \address{FernUniversit\"at in Hagen, Fakult\"at für Mathematik und Informatik, Universit\"atsstr. 11, Hagen}
 \email{steffen.kionke@fernuni-hagen.de}
\begin{abstract}
Recently, Eduard Schesler and   the second author constructed examples of finitely generated   residually finite, hereditarily just infinite groups with positive first $L^2$-Betti 
number. In contrast to their result, we show that a finitely generated residually-$p$ just infinite group has trivial first $L^2$-Betti number. Moreover, we prove that the normal homology rank gradient of a finitely generated, residually finite, just infinite group vanishes. \end{abstract}

\maketitle

\section{Introduction}  
The study of normal subgroups is a classical topic in group theory, and a large part of Nikolay Vavilov's research was dedicated to investigating the normal subgroup structure of Chevalley and Steinberg groups over arbitrary commutative rings (see \cite{VG04, VS08, Va11} and references therein). Here, we consider just infinite groups, i.e. infinite groups whose all non-trivial normal subgroups are of finite index. The importance of this family of groups lies in the fact that, by Zorn's Lemma, every finitely generated  infinite  group has a just infinite quotient. If a just infinite group is not residually finite, then it  is virtually a power of a simple group.

If we measure the height of a group as the Krull dimension of the lattice of its normal subgroups, then free groups will be among the tallest  and just infinite groups among the shortest. Fixing a natural number \(d\), a natural measure of fatness for a \(d\)-generated group \(\Gamma\) is its first \(L^2\)-Betti number \(b_1^{(2)}(\Gamma,\Q) \). Notice that for a \(d\)-generated group \(\Gamma\), \(b_1^{(2)}(\Gamma,\Q)  \le d-1\) and, moreover, if \(d > 1\), \(b_1^{(2)}(\Gamma,\Q)  = d-1\) if and only if \(\Gamma\) is free. Therefore, it is quite surprising that for any \(\epsilon > 0\) and a natural number \(d > 1\), there exists a simple \(d\)-generated group \(\Lambda\) with \(b_1^{(2)}(\Lambda,\Q)  > d-1-\epsilon\) \cite{OT11} and there exists a residually finite just infinite \(d\)-generated group \(\Gamma\) with \(b_1^{(2)}(\Gamma,\Q)  > d-1-\epsilon\) \cite{KS24}. 
In our first result we show that this is impossible in the case of finitely generated residually-$p$ just infinite  groups.

\begin{teo}\label{main}
Let $\Gamma$ be a finitely generated residually-$p$ just infinite  group. Then
$b_1^{(2)}(\Gamma,\Q)=0$.
 \end{teo}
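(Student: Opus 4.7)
My plan is to apply the mod-$p$ approximation theorem of Lück in the form proved by Bergeron--Linnell--Lück--Sauer, which reduces the vanishing of $b_1^{(2)}(\Gamma,\Q)$ to a rank-gradient statement along a suitable chain of normal subgroups. Since $\Gamma$ is finitely generated residually-$p$, the iterated Frattini series $N_1:=\Gamma$, $N_{i+1}:=[N_i,N_i]N_i^p$ consists of characteristic subgroups of finite $p$-power index in $\Gamma$ with $\bigcap_i N_i=\{1\}$. The approximation theorem then yields
\[
b_1^{(2)}(\Gamma,\Q) \ \leq\ \liminf_{i\to\infty}\ \frac{\dim_{\F_p} H_1(N_i,\F_p)}{[\Gamma:N_i]}\ =\ \liminf_{i\to\infty}\ \frac{d_p(N_i)}{[\Gamma:N_i]},
\]
where $d_p(N_i)$ denotes the minimal number of generators of $N_i$ modulo $p$. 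The task is therefore to show that this mod-$p$ rank gradient vanishes along the Frattini chain.

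To exploit just infinite-ness effectively, I would pass to the pro-$p$ completion $G:=\Gamma_{\hat p}$ and argue that $G$ is itself a just infinite pro-$p$ group. Any non-trivial closed normal $N\unlhd_c G$ with $N\cap\Gamma\neq 1$ is automatically open because $N\cap\Gamma$ has finite index in $\Gamma$ by hypothesis; the delicate case $N\cap\Gamma=\{1\}$ should be excluded by a density/separation argument, since such an $N$ would force $\Gamma$ to embed densely into the infinite pro-$p$ quotient $G/N$, which is hard to reconcile with the rigidity of a residually-$p$ just infinite group. Once $G$ is known to be just infinite, I would invoke the primeness of the Iwasawa algebra $\Lambda:=\F_p[[G]]$ for just infinite pro-$p$ groups (an Ardakov--Brown-type result) together with Jaikin-Zapirain's pro-$p$ refinement of Lück approximation, which bounds $b_1^{(2)}(\Gamma,\Q)$ by the normalized $\Lambda$-rank of $H_1(\Gamma,\Lambda)$. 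Primeness of $\Lambda$ then forces this rank to vanish, since any proper left $\Lambda$-module coming from the augmentation ideal must be $\Lambda$-torsion.

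The main obstacle I anticipate is the step of showing that $G$ is just infinite as a pro-$p$ group, specifically ruling out closed normal subgroups $N\unlhd_c G$ with $\Gamma\cap N=\{1\}$: this is a subtle rigidity problem about dense embeddings of a just infinite group into its pro-$p$ completion, and the naive density argument may not suffice without additional input. If this direct route is infeasible, a natural fallback is to appeal to the Grigorchuk--Wilson type trichotomy for just infinite pro-$p$ groups (virtually abelian, branch, hereditarily just infinite) and verify the rank-gradient vanishing case by case, using the $p$-adic analytic dimension bound in the virtually abelian case and more delicate representation-theoretic or Iwasawa-theoretic arguments in the remaining cases.
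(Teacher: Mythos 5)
Your proposal contains two genuine gaps, and neither of the fallback routes you sketch closes them.

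First, the step of showing that $G=\Gamma_{\hat p}$ is just infinite as a pro-$p$ group is not merely ``delicate''; it is the crux, and you have no argument for it. A closed normal subgroup $N\unlhd_c G$ with $N\cap\Gamma=\{1\}$ only yields an injection of $\Gamma$ into the pro-$p$ group $G/N$, and there is nothing contradictory about a residually-$p$ just infinite group embedding densely into many different pro-$p$ groups besides its full pro-$p$ completion. Just infiniteness of $\Gamma$ controls subgroups of $\Gamma$ meeting a normal subgroup non-trivially, but says nothing about closed normal subgroups of $G$ that intersect $\Gamma$ trivially. The paper in fact never asserts, and does not need, that $\Gamma_{\hat p}$ is just infinite: its argument is carried out on $\Gamma$ itself.

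Second, and independently, your final step is false as stated. Primeness of the Iwasawa algebra $\Lambda=\F_p[[G]]$ does not force the augmentation ideal to be $\Lambda$-torsion, and does not force its normalized rank to vanish. The free pro-$p$ group $F$ of rank $d$ already refutes this: $\F_p[[F]]$ is a domain (hence prime), yet the augmentation ideal $I_{\F_p[[F]]}$ is a free module of rank $d$, and indeed $\rg(F)=d-1>0$. So even granting that $G$ is a just infinite pro-$p$ group with prime completed group ring, you would still need a separate argument to bound the rank of $I_{\F_p[[G]]}$, which is exactly the content you are trying to prove.

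What your outline is missing is the mechanism for converting just infiniteness of $\Gamma$ into a rank-gradient bound. The paper does this quantitatively: it proves that if $N$ is the closed normal closure of $g^{p^k}$ in a finitely generated pro-$p$ group $G$, then $\rg(G/N)\ge\rg(G)-1/p^k$ (a variant of Barnea--Schlage-Puchta and Pappas). Combined with Ershov--Lück's inequality $b_1^{(2)}(\Gamma,\Q)\le\rg(\Gamma_{\hat p})$, the only remaining input is the existence, inside $\Gamma$, of an element $g$ of order $>p^k$ for $k$ large. This is supplied by Zel'manov's solution of the restricted Burnside problem: a finitely generated residually-$p$ group of bounded exponent is finite, so the infinite group $\Gamma$ must contain elements of arbitrarily large $p$-power (or infinite) order. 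Then the normal closure of $g^{p^k}$ in $\Gamma$ has infinite index by the rank-gradient estimate, contradicting just infiniteness. You should look for this quantitative stability of rank gradient under one-relator-type quotients rather than an algebraic primeness argument; the latter simply does not interact with rank in the way your proposal requires.
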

  A group $\Gamma$ is called  locally zero-one if every finitely
generated subgroup is either finite or of finite index. In \cite{EJ13} Ershov and  the first author constructed  examples of  infinite, finitely generated  residually finite and locally zero-one $p$-groups.  
  \begin{cor} Let $\Gamma$ be a  residually finite and locally zero-one $p$-group.  
Then $b_1^{(2)}(\Gamma,\Q)=0$.
  \end{cor}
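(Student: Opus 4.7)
The plan is to reduce to the finitely generated infinite case and then quotient by a canonical finite normal subgroup so that \cref{main} applies.

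If $\Gamma$ is finite the claim is trivial. If $\Gamma$ is infinite but not finitely generated, then every finitely generated subgroup of $\Gamma$ must be finite, since a finitely generated subgroup of finite index would make $\Gamma$ itself finitely generated. Hence $\Gamma$ is locally finite, thus amenable, and $b_1^{(2)}(\Gamma,\Q)=0$. I therefore assume that $\Gamma$ is finitely generated and infinite; being a residually finite $p$-group, $\Gamma$ is then residually-$p$.

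I will use the following dichotomy: every normal subgroup $N\triangleleft \Gamma$ is either of finite index in $\Gamma$ or locally finite. Indeed, $N$ inherits the locally zero-one property, so every finitely generated subgroup of $N$ is finite or of finite index in $\Gamma$; the existence of one infinite such subgroup forces $N$ itself to have finite index. Since extensions of locally finite groups by locally finite groups remain locally finite, the family of locally finite normal subgroups of $\Gamma$ is closed under products and admits a unique maximal element $L$.

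If $L$ is infinite, then $\Gamma$ contains an infinite amenable normal subgroup, and the theorem of Cheeger--Gromov yields $b_1^{(2)}(\Gamma,\Q)=0$. If $L$ is finite, then $\Gamma/L$ is just infinite: any nontrivial normal subgroup of $\Gamma/L$ lifts to $N\triangleleft\Gamma$ strictly containing $L$, so $N$ is not locally finite and, by the dichotomy, has finite index in $\Gamma$. Since $\Gamma/L$ remains a finitely generated residually-$p$ group, \cref{main} gives $b_1^{(2)}(\Gamma/L,\Q)=0$, and the standard identity $b_1^{(2)}(\Gamma,\Q)=b_1^{(2)}(\Gamma/L,\Q)/|L|$ for a finite normal kernel (which in the residually finite setting follows by choosing a finite index normal subgroup $\Gamma'\triangleleft\Gamma$ with $\Gamma'\cap L=1$ and comparing $b_1^{(2)}(\Gamma')$ under the finite-index multiplicativity on both sides) closes the argument.

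The main obstacle lies in simultaneously treating both possibilities for $L$: invoking the Cheeger--Gromov vanishing when $L$ is infinite, and the finite-kernel identity when $L$ is finite, each of which is standard but relies on technology outside the setup of \cref{main}.
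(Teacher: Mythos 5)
Your proposal is correct and follows essentially the same route as the paper: pass to a just infinite quotient $\Gamma/L$ with locally finite kernel, then split on whether the kernel is finite (Theorem \ref{main} plus commensurability / finite-index multiplicativity) or infinite (vanishing via an infinite amenable normal subgroup of infinite index). The only cosmetic differences are that the paper obtains the just infinite quotient by Zorn's Lemma and then observes the kernel is locally finite, whereas you construct it directly as the maximal locally finite normal subgroup, and the paper invokes Gaboriau's theorem for the infinite-kernel case where you cite Cheeger--Gromov; the underlying vanishing statement needed is the same.
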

 \begin{proof}
If $\Gamma$ is locally finite, then it is amenable, and so $b_1^{(2)}(\Gamma,\Q)=0$.
Assume that $\Gamma$ is not locally finite. Then it is finitely generated, and so it has a just infinite quotient $\Gamma/N$. Since $\Gamma$ is locally zero-one, $N$ is locally finite, and so amenable.

 If $N$ is finite, then since $\Gamma$ is residually finite, $\Gamma $ and $\Gamma/N$ are commensurable. By Theorem \ref{main}, $b_1^{(2)}(\Gamma/N,\Q)=0$. Therefore $b_1^{(2)}(\Gamma,\Q)=0$.
 
Now assume that $N$ is infinite.  Since it is amenable, $b_1^{(2)}(N,\Q)=0$. By a result of Gaboriau~\cite[Theorem~6.8]{Ga02} (see also~\cite{Pe24} for a recent new proof), a group with an infinite normal subgroup of infinite index and finite first $L^2$-Betti number has trivial first $L^2$-Betti number. Therefore, $b_1^{(2)}(\Gamma, \mathbb{Q})$ must be zero.
 \end{proof}
 
The proof of Theorem \ref{main} has two main ingredients. Let $\Gamma_{\hat p}$ be the pro-$p$ completion of $\Gamma$. Using a variation of results of Barnea, Schlage-Puchta \cite[Theorem 3.1]{BSP13} and Pappas \cite[Theorem 3.2]{Pa13} we show that if the rank gradient $\rg(\Gamma_{\hat p})$ of $\Gamma_{\hat p}$ is positive,  then the quotient of $\Gamma$ by the normal subgroup generated by a large $p$-power of an element of $\Gamma$ is infinite. In particular,   $\Gamma$ cannot be just infinite. 
On the other hand, by a result  of Erhov and L\"uck \cite[Theorem 1.6]{EL14}, $b_1^{(2)}(\Gamma,\Q)\le \rg(\Gamma_{\hat p})$.  This is explained in more detail in Section \ref{prop}.
In Section \ref{appendix} we provide new proofs of the results from \cite{BSP13, Pa13, EL14} mentioned above, which we hope will interest the reader.

The notion of rank gradient of pro-$p$ groups can be generalized in different ways to arbitrary profinite groups $G$. We consider the following generalization, called 
the (lower) normal homology rank gradient of $G$
$$\nhrg (G)= \liminf_{U\unlhd_o G}\frac{d({\rm H}_1(U,\widehat \Z))}{|G:U|}=  \sup_{V\unlhd_o G}\left (\inf _{U\unlhd_o G, U\le V}\frac{d({\rm H}_1(U,\widehat \Z))}{|G:U|}\right ).$$

In our second result, proved in Section \ref{homology}, we show that for a finitely generated residually finite just infinite group, the normal homology rank gradient of its profinite completion vanishes.
\begin{teo}\label{teo:nhrg-just-infinite}
Let $\Gamma$ be a finitely generated dense just infinite subgroup of a profinite group $G$. 
Then $\nhrg(G) =0$.
\end{teo}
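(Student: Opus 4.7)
The plan is to argue by contradiction, based on the following structural observation: any closed normal subgroup $N \unlhd G$ with $N \cap \Gamma \neq \{1\}$ is automatically open in $G$. Indeed, $N \cap \Gamma$ is a nontrivial normal subgroup of the just infinite group $\Gamma$, hence of finite index in $\Gamma$; since $\Gamma$ is dense in $G$ and $N$ is closed, the natural injection $\Gamma/(\Gamma \cap N) \to G/N$ has dense image in the compact Hausdorff space $G/N$, and a finite dense subset of a Hausdorff space must be everything, so $G/N$ is finite and $N$ is open. Consequently, every closed normal subgroup of infinite index in $G$ intersects $\Gamma$ only at the identity.

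Suppose now, for contradiction, that $\nhrg(G) > 0$. By definition there exist an open normal subgroup $V \unlhd_o G$ and a constant $c > 0$ such that
\[
d(H_1(U,\widehat{\Z})) \ge c \cdot [G:U]
\]
for every open normal $U \le V$. The goal is to produce a closed normal subgroup $N \unlhd G$ of infinite index with $N \cap \Gamma \ne \{1\}$; this will contradict the observation above and prove $\nhrg(G) = 0$.

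To construct such an $N$, I would adapt the variant of Barnea--Schlage-Puchta and Pappas developed in Section~\ref{appendix} to the normal homology rank gradient. In the pro-$p$ setup of Theorem~\ref{main}, that tool yields an element $g \in \Gamma$ and a $p$-power $n = p^k$ such that the normal closure of $g^n$ in $\Gamma$ has infinite index. In the present general setting, one would seek an element $g \in \Gamma$ and $n \ge 1$ with $g^n \ne 1$ such that the closed normal subgroup of $G$ generated by $g^n$ is of infinite index; then $g^n$ lies in both this subgroup and $\Gamma$, yielding the required contradiction.

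The principal obstacle I expect is that $d(H_1(U,\widehat{\Z}))$ controls the generator number of the profinite abelianization, which mixes contributions from all primes, while the Barnea--Schlage-Puchta--Pappas argument is fundamentally pro-$p$. A first attempt would be a pigeonhole-style reduction: find a single prime $p$ and a cofinal family of open normals $U \le V$ along which $\dim_{\F_p} H_1(U,\F_p) \ge c'[G:U]$, which would reduce the problem to a pro-$p$ quotient where Theorem~\ref{main}'s technique applies. If no such prime can be singled out, the argument must be carried out with mixed exponents, constructing the infinite-index normal subgroup directly from $G$-equivariant surjections of $V$ onto finite abelian groups of composite exponent; developing such a uniform-across-primes extension of Barnea--Schlage-Puchta is, in my view, the most demanding step of the proof.
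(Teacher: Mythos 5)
Your opening observation (a closed normal subgroup meeting $\Gamma$ nontrivially is open, hence every infinite-index closed normal subgroup of $G$ misses $\Gamma$) is correct and is implicitly used throughout the paper's proof. The set-up by contradiction is also right. But the core of your plan --- produce an infinite-index closed normal subgroup generated by some $g^n$ with $g\in\Gamma$, $g^n\neq 1$, via a pro-$p$ reduction --- cannot be carried out, and the gap is not just technical but structural.

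The problem with the pigeonhole/pro-$p$ reduction is that, \emph{under the very assumption} $\nhrg(G)>0$ that you are trying to refute, the paper's Step 3 shows that the maximal pro-$P$ quotient of $G$ is finite for \emph{every} finite set of primes $P$ (this follows from Step 2, which shows every nontrivial $g\in U\cap\Gamma$ has $\alpha$-almost prime order, together with the restricted Burnside problem). So there is no prime $p$ along which you can localize: the pro-$p$ quotient of $G$ is finite, $\Gamma$ does not embed into it, and $\F_p$-homology stops growing. The primes contributing to $H_1(U,\widehat{\Z})$ necessarily ``escape to infinity'' as $U$ shrinks; this is precisely the phenomenon that makes the profinite case genuinely harder than the pro-$p$ case of Theorem~\ref{main}. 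In fact, even the target of your plan is unreachable in principle: just-infiniteness of $\Gamma$ already forces every closed normal subgroup of $G$ that meets $\Gamma$ nontrivially to be open, so the normal closure of $g^n$ is always open when $g^n\neq 1$; the argument must instead show that these open subgroups cannot all have large normalized homology rank.

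The paper's actual proof therefore abandons the Barnea--Schlage-Puchta route entirely. After reducing to a prosoluble $U$, it shows (Lemma~\ref{lem:1-generated}, applied to the open normal closure $K$ of a single $g\in\Gamma$) that element orders in $U\cap\Gamma$ are $\alpha$-almost prime, deduces that $\Gamma$ is torsion and that every finite-$P$ quotient is finite, and then uses Lemma~\ref{lem:bound-irreps} (a degree bound on simple $\F_p[Q]$-modules in terms of $\sqrt{e|Q|}$) to exhibit, for each $\epsilon>0$, an open normal $A$ with $A/[A,A]$ a single simple $\F_p[G/A]$-module of dimension at most $\epsilon|G:A|$. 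This directly contradicts $\nhrg(G)>0$. These ingredients --- the $\alpha$-almost-prime structure of orders, the prosoluble reduction, and the simple-module degree bound used to force small abelianization --- are the ideas missing from your sketch, and they are not recoverable from a ``uniform-across-primes'' version of Barnea--Schlage-Puchta.
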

 
 A group is called hereditarily just infinite if all its subgroups of finite index are just infinite. We finish the Introduction by proposing the following natural question that arises from our work.
 \begin{Question} Is there a finitely generated hereditarily just infinite group with positive first $L^2$-Betti number which is residually-$P$ for a finite set of primes $P$?
 \end{Question}
 
 \section*{Acknowledgments}
The work of the first author is partially supported by the grant PID2020-114032GB-I00/AEI/ 10.13039/501100011033.

We are grateful to Pablo S\'anchez-Peralta, Henrique Souza and anonymous referees  for their useful comments on preliminary versions of this paper.
 \section{Residually-$p$ just infinite groups have trivial first $L^2$-Betti number} \label{prop}
Let $G$ be a finitely generated pro-$p$ group and $G>U_1>U_2>\ldots $ a residual chain (i.e. $\displaystyle \cap_i U_i=\{1\}$) of normal open subgroups of $G$. The minimal number of generators $d(G)$ of $G$ coincides with $\dim_{\F_p} {\rm H}_1(G;\F_p)$ and by the Nielsen-Schreier formula $d(U_i)-1\le |G:U_i|(d(G)-1)$. Therefore, the sequence $\frac{d(U_i)}{|G:U_i|}$ is decreasing and bounded from below. Hence,  we can define the limit
$$\rg (G)=\lim_{i\to \infty} \frac {d(U_i)}{|G:U_i|},$$
which is called the rank gradient of $G$. It does not depend on the chain and, in fact, we also have
\begin{equation}\label{infimum2}
\rg(G)=\inf_{U\lhd_o G} \frac{d(U)}{|G:U|}.
\end{equation}

Observe that if $\rg(G)>0$, then $G$ is infinite.

The following result is a variation of results of Barnea, Schlage-Puchta \cite[Theorem 3.1]{BSP13} and Pappas \cite[Theorem 3.2]{Pa13}. In the appendix we will present  a new proof of it.
\begin{pro}\label{rankgradient}
Let $G$ be a finitely generated pro-$p$ group,  $g\in G$ and $k\in \N$. Let  $N$ be the closed normal subgroup  of $G$ generated by $g^{p^k}$ and $\overline G=G/N$. Then
$\rg(\overline G)\ge \rg(G)-\frac 1 {p^k}$.
\end{pro}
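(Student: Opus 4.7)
The strategy is to bound the number of $V$-normal generators of $N$ inside an open normal subgroup $V$ of $G$ containing $N$, exploiting that $g$ commutes with $g^{p^k}$. Concretely, if $V\unlhd_o G$ contains $N$ and the image $\bar g$ of $g$ in $G/V$ has order exactly $p^k$, then setting $H=\langle g,V\rangle$ and picking a transversal $T$ of $H$ in $G$, I would show that $\{\,tg^{p^k}t^{-1}:t\in T\,\}$ generates $N$ as a closed $V$-normal subgroup, so $d_V(N)\le [G:V]/p^k$. The verification reduces to checking that the closed $V$-normal subgroup generated by this set is preserved under conjugation by $g$; this uses $[g^i,g^{p^k}]=1$, since any $G$-conjugate of $g^{p^k}$ can be written as $(tg^iu)g^{p^k}(tg^iu)^{-1}$ for $t\in T$, $0\le i<p^k$, $u\in V$, and after absorbing $g^i$ this collapses to a $V$-conjugate of an element of our set. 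Combined with the standard inequality $d(V/N)\ge d(V)-d_V(N)$ (obtained from comparing $V/\Phi(V)$ with $(V/N)/\Phi(V/N)$ and noting that conjugation is trivial modulo the Frattini subgroup), this yields $d(V/N)/[G:V]\ge \rg(G)-1/p^k$ under the hypothesis on the order of $\bar g$ in $G/V$.

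Next I would split into two cases according to the order of $\bar g$ in $\overline G$. If $\bar g$ has order exactly $p^k$, equivalently $g^{p^{k-1}}\notin N$, fix a residual chain $\overline G>V_1>V_2>\cdots$ of open normal subgroups with trivial intersection and let $U_n$ be its preimage in $G$, so $\bigcap_n U_n=N$. Because $g^{p^{k-1}}\notin N$ and the chain $U_n$ intersects down to $N$, for $n$ sufficiently large we have $g^{p^{k-1}}\notin U_n$; combined with $g^{p^k}\in N\subseteq U_n$ this forces the order of $g$ modulo $U_n$ to be exactly $p^k$, so the bound of the previous paragraph applies to $V=U_n$. Using $d(U_n)/[G:U_n]\ge \rg(G)$ and passing to the limit along this chain, together with the standard fact that $\rg(\overline G)=\lim_n d(U_n/N)/[G:U_n]$ for any residual chain of $\overline G$, gives the desired $\rg(\overline G)\ge \rg(G)-1/p^k$.

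If instead $\bar g$ has order strictly less than $p^k$ in $\overline G$, the inequality is trivial because $N$ must already be trivial. Indeed, then $g^{p^{k-1}}\in N$, and the identity $g^{p^k}=(g^{p^{k-1}})^p$ inside $N$ implies that in the pro-$p$ abelianisation $N^{\ab}=N/[N,N]$ we have $\overline{g^{p^k}}=p\cdot\overline{g^{p^{k-1}}}$, which becomes $0$ after reducing modulo $p$. But $A:=N/[N,N]N^p$ is a cyclic $\F_p[[\overline G]]$-module generated precisely by $\overline{g^{p^k}}$, so $A=0$; since any nontrivial pro-$p$ group surjects onto $\Z/p$ and therefore has nontrivial Frattini quotient, this forces $N=1$ and the inequality is immediate. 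The main technical obstacle is the transversal argument of the first paragraph; once the $V$-normal generation of $N$ by $[G:V]/p^k$ conjugates is verified, everything else is bookkeeping.
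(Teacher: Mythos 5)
Your proof is correct, but it follows a genuinely different route from the paper's. The paper's argument (in the appendix) is dimension-theoretic: it tensors the augmentation sequence of $\F_p[[G]]$ with $\F_p[[\overline G]]$, invokes Shapiro's Lemma to identify $\HH_1(G,\F_p[[\overline G]])$ with a quotient of $\F_p[[\overline G]]/\F_p[[\overline G]](\overline g -1)$, and then reads off the inequality from properties of Sylvester module rank functions (Lemmas \ref{rankgr}, \ref{boundpower} and Proposition \ref{modpfg}). You instead argue directly at the group level: given an open normal $V\supseteq N$ with $g$ of order $p^k$ modulo $V$, you pick a transversal $T$ of $\langle g,V\rangle$ in $G$, use $[g,g^{p^k}]=1$ and normality of $V$ to show that every $G$-conjugate of $g^{p^k}$ is a $V$-conjugate of some $t g^{p^k}t^{-1}$, so $N$ is $V$-normally generated by $|G:V|/p^k$ elements; then $d(V/N)\ge d(V)-d_V(N)$ follows by comparing Frattini quotients. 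Passing to the preimages $U_n$ of a residual chain in $\overline G$ (so that $\bigcap U_n=N$) and using $d(U_n)/|G:U_n|\ge\rg(G)$ gives the bound. Your handling of the degenerate case is also a nice variant: where the paper observes that a finite quotient witnessing order $p^k$ of $g$ factors through $\overline G$, you instead show that if $\overline g$ had order $<p^k$ then the cyclic $\F_p[[\overline G]]$-module $N/[N,N]N^p$ would be generated by $p\cdot\overline{g^{p^{k-1}}}=0$, forcing $N=1$ by a Nakayama-type argument. Your proof is essentially in the spirit of the original arguments of Barnea--Schlage-Puchta and Pappas that the paper set out to give a new proof of; it is more elementary and self-contained, whereas the paper's version is designed to showcase the Sylvester-rank-function framework that also yields Proposition \ref{EL}.
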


For the definition of $L^2$-Betti numbers of groups we refer the reader to the book of L\"uck \cite{Luc02}. The definition of the first $L^2$-Betti number is also presented in the appendix.
The following result was proved by Ershov and L\"uck \cite[Theorem 1.6]{EL14}. In the appendix we will provide an alternative proof of it.

\begin{pro} \label{EL}
Let $\Gamma$ be a finitely generated residually-$p$ group. Then $b_1^{(2)}(\Gamma, \Q)\le \rg (\Gamma_{\hat p})$.
\end{pro}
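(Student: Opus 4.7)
The plan is to combine Lück's approximation theorem with an identification of the mod-$p$ first homology of appropriate subgroups of $\Gamma$ in terms of pro-$p$ generator counts in $\Gamma_{\hat p}$.

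First, I fix a descending chain of open normal subgroups $\Gamma_{\hat p} = G_0 \supsetneq G_1 \supsetneq \dotsb$ with $\bigcap_i G_i = \{1\}$ realising the rank gradient, i.e.\ with $\lim_i d(G_i)/|\Gamma_{\hat p}:G_i| = \rg(\Gamma_{\hat p})$; such a chain exists by \eqref{infimum2}. Set $U_i := G_i \cap \Gamma$. Since $\Gamma$ is residually-$p$ and dense in $\Gamma_{\hat p}$, the $U_i$ form a residual chain of normal subgroups of $\Gamma$ of finite $p$-power index, with $\Gamma/U_i \isom \Gamma_{\hat p}/G_i$.

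Second, Lück's approximation theorem over $\Q$ gives
\[
 b_1^{(2)}(\Gamma,\Q) = \lim_i \frac{b_1(U_i;\Q)}{|\Gamma:U_i|}.
\]
Since $b_1(U_i;\Q) = \rk_\Z U_i^{\ab} \le \dim_{\F_p}(U_i^{\ab}/p) = d_{\F_p}(H_1(U_i;\F_p))$, this yields
\[
 b_1^{(2)}(\Gamma,\Q) \le \liminf_i \frac{d_{\F_p}(H_1(U_i;\F_p))}{|\Gamma:U_i|}.
\]

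Third, and this is the heart of the matter, I would prove
\[
 d_{\F_p}(H_1(U_i;\F_p)) = d(G_i) \qquad (\ast)
\]
for every $i$. The natural map $H_1(U_i;\F_p) \twoheadrightarrow G_i^{\ab}/p$ induced by the density of $U_i$ in $G_i$ is always surjective; by Pontryagin duality for finite $\F_p$-vector spaces, injectivity is equivalent to every character $\chi\colon U_i \to \F_p$ being continuous with respect to the topology inherited from $\Gamma_{\hat p}$ (so that $\chi$ extends uniquely to a continuous character of $G_i$). Given such $\chi$ with kernel $K$ of index $p$ in $U_i$, I would consider the $\Gamma$-core $W := \bigcap_{\gamma \in \Gamma/U_i} \gamma K \gamma^{-1}$. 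Because $U_i \lhd \Gamma$ and $K \lhd U_i$, the subgroup $W$ is contained in $K$, normal in $\Gamma$, and $|\Gamma:W|$ divides $|\Gamma:U_i| \cdot p^{|\Gamma:U_i|}$, so it is a $p$-power. Hence $W$ is open in $\Gamma_{\hat p}$ and $\chi$ vanishes on $W$, proving that $\ker \chi$ is open in the relative topology of $U_i$.

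Combining the three steps,
\[
 b_1^{(2)}(\Gamma,\Q) \;\le\; \lim_i \frac{d(G_i)}{|\Gamma_{\hat p}:G_i|} \;=\; \rg(\Gamma_{\hat p}),
\]
as desired. The main obstacle is $(\ast)$: the non-continuous part of $\Hom(U_i,\F_p)$ would correspond to a gap between $H_1(U_i;\F_p)$ and $G_i^{\ab}/p$, and the argument above closes that gap only because $U_i$ is \emph{normal} in $\Gamma$. Without normality the $\Gamma$-core $W$ need not lie in $U_i$, and one loses control over $|\Gamma:W|$; this is why the proof uses a chain of open normal subgroups of $\Gamma_{\hat p}$ rather than an arbitrary cofinal chain.
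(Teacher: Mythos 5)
Your proof is correct, and it takes a genuinely more direct, more elementary route than the paper's. The paper encodes both $b_1^{(2)}(\Gamma,\Q)$ and $\rg(\Gamma_{\hat p})$ as Sylvester module rank functions applied to the augmentation ideal $I_{\Z[\Gamma]}$ (via formula \eqref{l2} and Lemma~\ref{rankgr}), and then deduces the inequality from a termwise comparison $\dim_{\Q[\Gamma/N_i]}\le\dim_{\F_p[\Gamma/N_i]}$ inside $\mathbb P(\Z[\Gamma])$. Your proof instead applies L\"uck approximation directly to the Betti numbers, uses the same underlying fact (that for a finitely generated abelian group the $\Q$-rank is at most the $\F_p$-rank, applied to $H_1(U_i;\Z)$), and then identifies $\dim_{\F_p}H_1(U_i;\F_p)$ with $d(G_i)$. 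The two arguments share the same two pillars (L\"uck approximation and the $\Q$-versus-$\F_p$ rank inequality), but you unpack them without the Sylvester rank formalism, which makes the proof self-contained; the paper's route has the advantage of reusing Lemma~\ref{rankgr} and fitting into the module-theoretic framework used elsewhere in the appendix.

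One genuine merit of your write-up is that you prove $(\ast)$ explicitly. The paper's proof of Lemma~\ref{rankgr} silently passes between the discrete $H_1(N_i;\F_p)$ (which is what tensoring the augmentation sequence over $\F_p[N_i]$ actually produces) and the profinite $d(U_i)$, and it is exactly your normal-core argument that justifies this identification. Two small presentational quibbles: a subgroup $W\le\Gamma$ cannot literally be open in $\Gamma_{\hat p}$ — what you mean (and what you correctly use) is that its closure in $\Gamma_{\hat p}$ is open and meets $\Gamma$ in $W$, so $W$ is open in the subspace topology; and the normal core can be replaced outright by the single subgroup $[U_i,U_i]U_i^p$, which is already characteristic in $U_i$, hence normal of finite $p$-power index in $\Gamma$, eliminating the need to argue character by character. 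Neither affects correctness.
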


Now we are ready to prove Theorem \ref{main}.

\begin{proof}[Proof of  Theorem \ref{main}]
Let $\Gamma$ be a finitely generated residually-$p$ just infinite group. Assume that $b_1^{(2)}(\Gamma, \Q)>0$. Then by Proposition \ref{EL}, $\alpha=\rg(\Gamma_{\Hat p})>0$. Let $k$ be such that $\frac 1 {p^k}<\alpha$.
By Zel'manov's solution of the restricted Burnside problem \cite{Ze90, Ze91}, we know that there exist $g\in \Gamma$ such that $o(g)>p^k$ (note that, since $\Gamma$ is residually $p$, $o(g)$ is either infinite or a power of $p$). By Theorem \ref{rankgradient}, the normal subgroup of $\Gamma$ generated by $g^{p^k}$ is of infinite index. But this contradicts the condition that $\Gamma$ is just infinite.
\end{proof}

\section{Just infinite groups have vanishing normal homology rank gradient}\label{homology}
Let $G$ be a finitely generated profinite group. Observe that $${\rm H}_1(G;\widehat \Z)\cong G/\overline{[G,G]}.$$ Define the   homology rank gradient and normal homology rank gradient of $G$ as
$$\hrg (G)= \liminf_{U\le_o G}\frac{d({\rm H}_1(U;\widehat \Z))}{|G:U|} \textrm{\ and\ } \nhrg (G)= \liminf_{U\unlhd_o G}\frac{d({\rm H}_1(U;\widehat \Z))}{|G:U|}.$$
We note that $\hrg(G) \leq \nhrg(G)$ and, moreover, for $H \unlhd_o G$ we have $$\hrg(H) =  |G:H| \hrg(G).$$

 For a finitely generated abstract group $\Gamma$, its first homology rank gradient and normal homology rank gradient are  defined to be the  homology rank gradient  and the normal homology rank gradient of its profinite completion 
$$\hrg (\Gamma)=\hrg(\widehat \Gamma), \ \nhrg (\Gamma)=\nhrg(\widehat \Gamma).$$

If   $G$ is a pro-$p$ group and $U$ is an  open subgroup, there exists an open normal subgroup $K$ of $G$ contained in $U$; then one has 
$$\frac{\dim_{\F_p} {\rm H}_1(K;\F_p)}{|G:K|}\le \frac{\dim_{\F_p} {\rm H}_1(U;\F_p)}{|G:U|}.$$
Therefore,  this implies that in this case $\nhrg(G)=\hrg(G)=\rg(G)$.

The proof of Theorem \ref{teo:nhrg-just-infinite} relies on the following two simple lemmas.
\begin{lem}\label{lem:1-generated}
Let $G$ be a profinite group. Let $K$ be the closed normal subgroup generated by $g \in G$.
Then $d(K/[K,K]) \leq |G:C_G(g)K|$, where $C_G(g)$ denotes the centralizer of $g$ in $G$.
\end{lem}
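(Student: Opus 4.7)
The overall strategy is to realise $K/[K,K]$ as topologically generated by the images of the conjugates $g^h$, and then to bound the number of distinct images by $|G:C_G(g)K|$. Since $K$ is by definition the closed subgroup of $G$ generated by $\{g^h : h \in G\}$, the classes $\overline{g^h} \in K/[K,K]$ topologically generate this abelian profinite group, and therefore $d(K/[K,K])$ is at most the cardinality of the set of distinct images $\{\overline{g^h} : h \in G\}$.

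The main step is to show that $\overline{g^h}$ depends only on the right coset $C_G(g)K \cdot h$. First I would observe that $C_G(g)K$ is a closed subgroup of $G$: it is a subgroup because $C_G(g)$ normalises the normal subgroup $K$, and it is closed as the image of the compact set $C_G(g)\times K$ under multiplication. The coset claim then has two parts: (i) if $c \in C_G(g)$, then $g^{ch} = g^h$ as elements of $K$, which is immediate from $c^{-1}gc = g$; and (ii) if $k \in K$, then $g^{kh} \equiv g^h \pmod{[K,K]}$. For (ii) a direct computation gives $g^{kh}(g^h)^{-1} = h^{-1}(k^{-1}gkg^{-1})h$, whose inner factor is a commutator of elements of $K$; since $[K,K]$ is normal in $G$, conjugation by $h^{-1}$ keeps it in $[K,K]$.

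Combining these observations, the map $h \mapsto \overline{g^h}$ factors through the coset space $(C_G(g)K)\backslash G$, so its image has at most $|G:C_G(g)K|$ elements and topologically generates $K/[K,K]$, yielding the bound. The only substantive point is the commutator verification in (ii); everything else is formal once one notes that $C_G(g)K$ is a subgroup and that conjugation by elements of $K$ acts trivially on every abelian quotient of $K$.
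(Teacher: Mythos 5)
Your proof is correct and takes essentially the same approach as the paper's: the paper picks a transversal $T$ of $C_G(g)K$ and asserts that $\{g^t : t\in T\}$ generates $K/[K,K]$, which is exactly the claim that the map $h\mapsto \overline{g^h}$ factors through the coset space that you verify explicitly via the commutator computation $g^{kh}(g^h)^{-1}=h^{-1}(k^{-1}gkg^{-1})h \in [K,K]$ and the observation that $C_G(g)$ fixes $g$. Your write-up merely makes explicit the details (that $C_G(g)K$ is a closed subgroup, and the two-part coset invariance) that the paper leaves to the reader.
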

\begin{proof}
If $T$ is a transversal of $C_G(g)K$ in $G$, then $K/[K,K]$ is generated by $\{g^t:t\in T\}$. The assertion follows, since $|T| =  |G:C_G(g)K|$.
\end{proof}

\begin{lem}\label{lem:bound-irreps}
Let $Q$ be a finite group of exponent $e$ and let $p$ be a prime.
The degree of any simple $\F_{p}[Q]$-module is at most $\sqrt{e\cdot |Q|}$.
\end{lem}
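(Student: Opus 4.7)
The plan is to combine the Artin--Wedderburn structure of the image of $\F_p[Q]$ acting on a simple module $V$ with Brauer's theorem on splitting fields in positive characteristic.

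Setting $n=\dim_{\F_p}V$, the first step is to unpack Schur's lemma: $D:=\End_{\F_p[Q]}(V)$ is a finite division ring, hence by Wedderburn's theorem a finite field $\F_{p^f}$. Viewing $V$ as a vector space of dimension $n/f$ over $D$, the Jacobson density theorem identifies the image $R$ of $\F_p[Q]$ in $\End_{\F_p}(V)$ with $\End_D(V)\cong\Mat_{n/f}(\F_{p^f})$. Since $R$ is an $\F_p$-algebra quotient of $\F_p[Q]$, a dimension count gives
\[
\frac{n^{2}}{f}=\dim_{\F_p}R\le\dim_{\F_p}\F_p[Q]=|Q|,\qquad\text{so}\qquad n^{2}\le f\cdot|Q|.
\]
It therefore suffices to prove that $f\le e$.

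The second step is the real content. Let $e'$ denote the $p'$-part of $e$ and let $t$ be the multiplicative order of $p$ modulo $e'$ (with $t=1$ if $e'=1$). By Brauer's splitting-field theorem in the modular setting---which follows from the fact that Brauer characters of $Q$ take values in $\Z[\zeta_{e'}]$ together with Wedderburn's result that finite fields have trivial Brauer group---the field $\F_{p^t}$ splits every finite group of exponent dividing $e$. Since $\F_p$ is perfect, $V\otimes_{\F_p}\overline{\F_p}$ is semisimple, and its endomorphism ring $D\otimes_{\F_p}\overline{\F_p}\cong\overline{\F_p}^{f}$ shows that it is the Galois orbit of a single absolutely simple $\overline{\F_p}[Q]$-module whose field of definition is precisely $\F_{p^f}$. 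Brauer's theorem thus forces $\F_{p^f}\subseteq\F_{p^t}$, hence $f\mid t$. Because $t$ divides $\phi(e')\le e'\le e$, we conclude $f\le e$ and therefore $n\le\sqrt{e\cdot|Q|}$. (The edge case $e'=1$, where $Q$ is a $p$-group and the only simple $\F_p[Q]$-module is trivial, is immediate.)

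The main obstacle is clearly this second step: the Wedderburn-theoretic opening is a routine application of standard structure theory, but obtaining the bound $f\le e$ requires genuinely representation-theoretic input in order to connect the $\F_p$-dimension of the endomorphism field $D$ to the exponent of $Q$. This is the point at which Brauer's theorem---or an equivalent direct argument relating the central class sums $\sum_{g\in C}g$ to the eigenvalues of elements of $Q$ acting on $V$, all of which are $e'$-th roots of unity---becomes essentially unavoidable.
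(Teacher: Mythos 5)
Your proof is correct and follows essentially the same route as the paper's: both bound the degree of a simple module via the Wedderburn structure (giving $n^2/f \le |Q|$ where $f=[\End_{\F_p[Q]}(V):\F_p]$) and then invoke Brauer's splitting-field theorem in characteristic $p$ to bound the endomorphism-field degree $f$ by $e$. The paper phrases this through the full Artin--Wedderburn decomposition of $\F_p[Q]/J$ rather than Jacobson density applied to a single module, but the underlying dimension count and the appeal to Brauer's theorem are the same.
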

\begin{proof}
Let $J$ be the Jacobson radical of  $\F_p[Q]$. Then $$\F_p[Q]/J\cong  \prod_{i=1}^r M_{n_i}(F_i)$$ where $F_i$ is a finite extension of $\F_p$ of degree $e_i$. The $i$th factor corresponds to a simple $\F_p[Q]$-module of degree $n_ie_i$ (and all simple $\F_p[Q]$-modules are obtained in this way). We have $\sum_{i=1}^r n_i^2 e_i \leq |Q|$. Adjoining to $\F_p$ all the $e$th roots of unity gives a field $F$. It a splitting field for $Q$: all fields $F_i$ are subfields of $F$. Hence $e_i \leq e$ for all $i$ and thus
\[
	(n_ie_i)^2 \leq \sum_{i=1}^r n_i^2 e_i e \leq e\cdot |Q|. \qedhere
\]
\end{proof}

Let $C \geq 1$. We say that a natural number $n$ is a $C$-almost prime if for every factorization $n=kl$, either $k\le C$ or $l\le C$. It is not difficult to see that if $n>1$ is  a $C$-almost prime, then there exists a prime $p$ dividing $n$ such that $\frac np\le C^2$ (write $n=kl$ with $k<l$ and $k$ maximal, then every prime divisor of $l$ has this property). 

\begin{proof}[Proof of Theorem \ref{teo:nhrg-just-infinite}] We assume for a contradiction that $\nhrg(G)>0$. Since $\Gamma$ is just infinite and dense in $G$, every open normal subgroup $N$ of $G$ contains an open normal subgroup $K$ normally generated by a single element. By Lemma \ref{lem:1-generated}, $d(K/[K,K]) \leq |G:K|$. Hence  $\nhrg(G)\leq 1$.  Let $\alpha=\frac 2{\nhrg (G)}$.

 From the definition of  homology rank gradient, it follows that there exists an open normal subgroup $U$ of $G$ such that 
 \begin{equation}\label{conditions}
 |G:U|\ge \frac{4\alpha} 3 \textrm{\ and\ }
 \frac{d(N/[N,N])}{|G:N|}\ge \frac 34\nhrg(G)=\frac 3{2\alpha} \end{equation}
 for every open normal subgroup $N$ of $G$ contained in $U$ and therefore
$$d(N/[N,N])\ge   \frac {3|G:N|}{2\alpha}\ge 2.$$

\medskip

\textit{Step 1:}   Without loss of generality we can assume that $U$ is prosoluble.

Since for every open normal subgroup $N$ of $G$ contained in $U$, $d(N/[N,N])\ge 1$,   the maximal prosoluble quotient $ U/S$ of $U$ is infinite. The subgroup $S$ is characteristic in $U$, and so it is normal in $G$. Since $\Gamma$ is just infinite, it intersects $S$ trivially and also embeds into $ G/S$. Moreover, since the profinite group  $S$ does not have continuous abelian quotients,  by (\ref{conditions}),  $\nhrg ( G/S)\ge  \frac{3}{4}\nhrg(G)>0$. Thus, without loss of generality we will   assume that $S=1$ and so $U$ is prosoluble.

\medskip

\textit{Step 2:} For every non-trivial element $g\in U\cap \Gamma$  the order $o(g)$ of $g$ is $\alpha$-almost prime.

Assume that $o(g)$ is infinite  or is not $\alpha$-almost prime. Then there exists  a finite quotient $G/T$ of $G$ such that  the order $o(\overline g)$ of the image $\overline g$ of $g$ in $G/T$ is not $\alpha$-almost prime.  Let $o(\overline g)=k\cdot l$ with, $k,l>\alpha$. 

The group $UT/T$ is finite soluble. Therefore, there exists a normal subgroup $Q=M/T\le UT/T$ of $G/T$ such that $Q\cap\langle \overline g\rangle $ is of prime order (say $p$). Thus  $p$ divides $k$ or $l$, and so, $\frac {o(\overline g)}p>\alpha$. 

Consider the normal closed subgroup $K$ of $G$ generated by $g^{\frac {o(\overline g)}p}$. Since $\Gamma$ is just infinite, the subgroup $K$ is open in $G$. Moreover $K\le M$, and so the order of the image of $g$ in $G/K$ is $\frac {o(\overline g)}p$.  Since $g$ centralizes $g^{\frac {o(\overline g)}p}$, Lemma~\ref{lem:1-generated} gives $d(K/[K,K])\leq  \frac {|G:K|}{\alpha}$. On the other hand, $K\le U$ implies  $d(K/[K,K])\ge \frac {3|G:K|}{2\alpha}$. This gives a contradiction and hence
$o(g)$ is   $\alpha$-almost prime.

\medskip

 \textit{Step 3:}  Let $ P$ be a finite set of primes. Then the maximal pro-$P$ quotient of $G$ is finite.

Denote by $\overline G$  the maximal pro-$P$ quotient of $G$ and assume that it is infinite. Since $\Gamma$ is just infinite, we can see $\Gamma$ as a dense subgroup of $\overline G$. In particular, all elements of $\Gamma$ are $P$-elements. By the solution of the restricted Burnside problem, we know that there exist $g\in \Gamma$ such that $o(g)$ is arbitrarily large. However, this contradicts the fact that $o(g)$ should also be $\alpha$-almost prime by Step 2.

\medskip

\textit{Step 4:} Let $\epsilon > 0$. There is an open normal subgroup $U_\epsilon \unlhd_o G$ contained in $U$
such that $G/N$ has exponent at most $\epsilon^2 |G:N|$ for all $N \unlhd_o G$ contained in $U_\epsilon$.

By Step 2, $\Gamma$ is a torsion group. Therefore,  it follows from Step 3, that there is a prime $p$ with $\frac{1}{p} < \epsilon^2$ and an element $g \in \Gamma\cap U$ of order $p$.
Let $K$ be the closed normal subgroup generated by $g$. Since $\Gamma$ is just infinite, $K$ is open and $K/[K,K]$ is a finite elementary abelian $p$-group of order at least $p^{\frac{3|G:K|}{2\alpha}} \geq p^2$.
Define $U_\epsilon = [K,K]$. Let $N \subseteq U_\epsilon$ be open normal in $G$, then the exponent of $G/N$ is at most $$p|G/K||U_\epsilon:N| \leq \frac{|G/N|}{p} \leq \epsilon^2 |G:N|.$$

\medskip
 
 \textit{Step 5:} If $N \unlhd_o G$ is contained in $U$, then the exponent of $N/[N,N]$ is $\alpha$-almost prime. 

This is immediate from Step 2 using that $\Gamma$ is dense in $G$.

\medskip

\textit{Step 6:} There is an open normal subgroup with small homology rank.

Let $\epsilon > 0$ be given.
By Step 2,  $\Gamma$ is a torsion group. Thus, by Step 3, there is an element $g \in \Gamma \cap U_\epsilon$ of prime order $p > \alpha$. Let $K$ be the closed normal subgroup of $G$ generated by $g$. Since $\Gamma$ is just infinite, $K$ is open and $K/[K,K]$ is a finite elementary abelian $p$-group.

Define $P = \{q \text{ prime } \mid q \leq \alpha\} \cup \{p\}$.  By Step 3, the maximal pro-$P$ quotient $K/M_0$ of $K$ is finite. 
Since $p$ divides the order of $K/M_0$ and $K/M_0$ is soluble,
   there are normal subgroups $M_0\le M\le A\le K$ of $G$, such that $\overline{A}=A/M$ is a non-trivial  $p$-elementary abelian group and it is also a simple $\F_p[G/A]$-module. Among all such pairs $M\le A$ choose one with minimal $M$.
   
   Since the order of $\overline{A}$ is divisible by $p$ and the exponent of $A/[A,A]$ is $\alpha$-almost prime, it follows that $A/[A,A]$ is a $P$-group. Therefore, $M_0 \subseteq [A,A]$. We can find $[A,A] \leq A_0 \leq A$ such that $A_0/[A,A]$ is a non-trivial $p$-elementary abelian group and also a simple $\mathbb{F}_p[G/A]$-module. 
However, since $[A,A] \leq M$ and $M$ is minimal, it follows that $M = [A,A]$. Therefore, $A/[A,A] \cong \overline{A}$.

By Lemma \ref{lem:bound-irreps} and Step 4, we have $\dim_{\F_p} \overline{A} \leq \epsilon |G:A|$.
Since $\epsilon$ was arbitrary and $d(A/[A,A])/|G:A| \leq \epsilon$, this completes the proof.
\end{proof}

We note that Theorem \ref{teo:nhrg-just-infinite} implies:
\begin{cor}
Let $\Gamma$ be a finitely generated, residually finite, just infinite group.  Then $\nhrg (\Gamma)=0$.
\end{cor}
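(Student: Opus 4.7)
The plan is to reduce the corollary directly to Theorem \ref{teo:nhrg-just-infinite} by taking $G$ to be the profinite completion $\widehat{\Gamma}$.

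First I would observe that since $\Gamma$ is residually finite, the canonical map $\Gamma \to \widehat{\Gamma}$ is injective, so we may identify $\Gamma$ with its image in $\widehat{\Gamma}$. By the universal property of the profinite completion, this image is dense in $\widehat{\Gamma}$. Moreover, $\Gamma$ is finitely generated and just infinite by hypothesis, so $\Gamma$ is a finitely generated dense just infinite subgroup of the profinite group $\widehat{\Gamma}$.

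The hypotheses of Theorem \ref{teo:nhrg-just-infinite} are therefore satisfied with $G = \widehat{\Gamma}$, which gives $\nhrg(\widehat{\Gamma}) = 0$. Finally, by the definition of the normal homology rank gradient of an abstract group recalled in Section \ref{homology}, we have $\nhrg(\Gamma) = \nhrg(\widehat{\Gamma}) = 0$.

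There is no real obstacle here: the corollary is essentially a translation of Theorem \ref{teo:nhrg-just-infinite} from the profinite to the abstract setting, made possible precisely because residual finiteness of $\Gamma$ guarantees that the density hypothesis of the theorem holds for $\Gamma \le \widehat{\Gamma}$.
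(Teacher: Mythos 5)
Your proof is correct and matches the paper's (implicit) reasoning: the corollary is obtained by applying Theorem \ref{teo:nhrg-just-infinite} to $G = \widehat{\Gamma}$, using residual finiteness to get the dense embedding $\Gamma \hookrightarrow \widehat{\Gamma}$ and the definition $\nhrg(\Gamma) = \nhrg(\widehat{\Gamma})$.
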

\begin{rem}
The $d$-generated, residually finite, just infinite groups constructed in \cite{KS24} have large ``upper normal homology rank gradient'', i.e.,
\[
	\limsup_{N\unlhd_o \Gamma}\frac{d({\rm H}_1(N;\Z))}{|G:N|} > d-1 -\epsilon.
\]
In particular, the ``lower'' and ``upper''  normal homology rank gradient can be arbitrarily far apart. 
\end{rem}
\begin{rem}
Under the assumptions of Theorem \ref{teo:nhrg-just-infinite}, the weaker conclusion $\hrg(G) =0$ can  be obtained using the following simple argument:

First observe that a residually finite just infinite group that is not hereditarily just infinite virtually decomposes as a direct product of two infinite groups and hence cannot have positive normal homology rank gradient (see e.g.~\cite[Lemma 2.9]{KS24}).
Therefore,  $\Gamma$ is  hereditarily just infinite.

Assume that $\hrg (G)>0$. If $H$ is an open subgroup of $G$, then 
\begin{equation}\label{eq:mult-hrg}
\hrg(H)=  |G:H|\hrg(G).
\end{equation}  Since  $\Gamma$ is hereditarily just infinite, it is enough to prove the theorem for $H\cap \Gamma$. In particular, without loss of generality we can assume that
$\hrg(G)>2$. From the definition of  homology rank gradient, it follows that there exists an open subgroup $U$ of $G$ such that for every open normal subgroup $K$ of $U$, $d({\rm H}_1(K,\widehat \Z ))\ge \frac {3|G:K|}2$.  Let $1\ne g \in U\cap  \Gamma$. Lemma~\ref{lem:1-generated} applied to the normal closed subgroup $K$ of $G$ generated by $g$ gives $d(K/[K,K] ) \leq |G:K|$. This gives a contradiction. 

We don't know whether the normal homology rank gradient satisfies an inequality analogous to \eqref{eq:mult-hrg}.
\end{rem}

\section{Appendix: new proofs of Proposition \ref{rankgradient} and Proposition \ref{EL}} \label{appendix}
Our proofs of Propositions \ref{rankgradient} and \ref{EL} are based on the observation that for a finitely generated group $\Gamma$, $\rg(\Gamma_{\hat{p}})$ and $\beta_1(\Gamma, \mathbb{Q})$ can be expressed  in terms of  ranks of the module $I_{\mathbb{Z}[\Gamma]}$. Therefore, we will first present the general theory of Sylvester module rank functions   main results about them.
  \subsection {Sylvester rank functions}	\label{sylvester}
In this paper all  rings are unitary and $R$-modules are left $R$-modules.
A {\bf Sylvester module rank function} $\dim$ on a   ring $R$ is a function that   assigns a non-negative real number to each finitely presented $R$-module   and satisfies the following conditions.
  \begin{enumerate}
\item [(SMod1)] $\dim \{0\} =0$, $\dim R =1$;
\item [(SMod2)]  $\dim(M_1\oplus M_2)=\dim M_1+\dim M_2$;
\item[(SMod3)] if $M_1\to M_2\to M_3\to 0$ is exact then
$$\dim M_1+\dim M_3\ge \dim M_2\ge \dim M_3.$$
\end{enumerate}
The space of Sylvester module rank functions on $R$ is denoted by $\mathbb{P}(R)$. If $\dim$, $\dim_i\in \mathbb{P}(R)$  $(i\in \N)$ are Sylvester module rank functions  then we write $$\dim=\displaystyle \lim_{i\to \infty} \dim_i \text{in the space }\mathbb{P}(R)$$ if   for every finitely presented $R$-module $M$, $\dim M=\displaystyle  \lim_{i\to \infty} \dim_i M$. 

For two Sylvester module rank functions $\dim_1$ and $\dim_2$ on $R$ we write $\dim_1\le \dim _2$ if for every finitely presented 
 $R$-module $M$, $\dim_1 M\le \dim_2 M$.
 
One  can extend  Sylvester module rank functions $\dim$ on $R$  to an arbitrary
finitely generated $R$-module $M$  in the following natural way:
\begin{equation} \label{liext}
\dim M=\inf \{\dim \widetilde M:\ \widetilde M \textrm{\ is   finitely presented   and\ } M \textrm{\ is a quotient of\ }\widetilde M\}.\end{equation}
It is easy to verify that the properties (SMod2) and (SMod3) still hold true.  Li \cite{Li21} constructed an extension of $\dim$ to all $R$-modules, but we will not use it in this paper.
 
 One should be cautious. The existence of the limit $\dim=\displaystyle \lim_{i\to \infty} \dim_i$ in $\mathbb{P}(R)$ does not imply that $\dim M=\displaystyle  \lim_{i\to \infty} \dim_i M$ for any finitely generated $R$-module $M$.  
 If $M$ is a finitely generated $R$-module, we  always have that  
 \begin{equation}\label{easyin}
 \dim M \ge   \limsup_{i\to \infty} \dim_i M.
 \end{equation}
 However in \cite{Ja24}, the following result was proved.
 \begin{pro} \label{limitfg}Let $R$ be a ring and let $\dim$, $\dim_i\in \mathbb{P}(R)$ $(i\in \N)$ be  Sylvester module rank functions on $R$. Assume that $\dim=\displaystyle \lim_{i\to \infty} \dim_i$ and for all $i$, $\dim\le \dim_i$. Then,  for any finitely generated $R$-module $M$, $$\dim M=\displaystyle  \lim_{i\to \infty} \dim_i M.$$
\end{pro}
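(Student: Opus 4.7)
The plan is to bracket $\dim M$ between $\liminf_i \dim_i M$ and $\limsup_i \dim_i M$ using the two hypotheses separately. The upper bound $\dim M \ge \limsup_i \dim_i M$ is exactly the content of the already established inequality \eqref{easyin}, which was derived from the convergence $\dim = \lim_i \dim_i$ in $\mathbb{P}(R)$. What remains is the matching lower bound $\dim M \le \liminf_i \dim_i M$; in fact I will establish the stronger pointwise inequality $\dim M \le \dim_i M$ for every $i$, which does not even require the limit hypothesis (only the assumption $\dim \le \dim_i$).

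For this pointwise inequality I would use the extension formula \eqref{liext}. Fix $i$ and a finitely generated $M$. For any finitely presented $\widetilde M$ admitting a surjection $\widetilde M \to M \to 0$, property (SMod3) gives $\dim_i M \le \dim_i \widetilde M$, which shows that $\dim_i M$ agrees with the infimum $\inf_{\widetilde M} \dim_i \widetilde M$ (the infimum over all such finitely presented approximations of $M$). By the same token $\dim M = \inf_{\widetilde M} \dim \widetilde M$. Since the hypothesis $\dim \le \dim_i$ on finitely presented modules gives $\dim \widetilde M \le \dim_i \widetilde M$ for every such $\widetilde M$, passing to the infimum over the same indexing set preserves the inequality and yields $\dim M \le \dim_i M$.

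Combining the two halves, one obtains
\[
\dim M \;\le\; \liminf_{i\to\infty} \dim_i M \;\le\; \limsup_{i\to\infty} \dim_i M \;\le\; \dim M,
\]
so the limit exists and equals $\dim M$. The proof is essentially an exercise in the definitions: the crucial observation is that the pointwise inequality $\dim \le \dim_i$ on finitely presented modules is automatically inherited by the infimum-extensions to finitely generated modules, because both extensions are taken as infima over the \emph{same} family of finitely presented covers. Consequently there is no real obstacle; the only thing worth highlighting is that the hypothesis $\dim \le \dim_i$ is what prevents the pathology mentioned after \eqref{easyin} and forces the two one-sided bounds to meet.
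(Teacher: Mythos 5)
The paper does not actually reproduce a proof of this proposition; it cites \cite{Ja24}. So there is no in-text proof to compare against. That said, your argument is correct and is the natural one given the setup the paper lays out. You split the claim into the one-sided bound $\dim M \ge \limsup_i \dim_i M$, which is exactly \eqref{easyin}, and the pointwise bound $\dim M \le \dim_i M$, which you obtain by observing that both $\dim M$ and $\dim_i M$ are infima over the same family of finitely presented covers $\widetilde M \twoheadrightarrow M$ and that $\dim \widetilde M \le \dim_i \widetilde M$ term by term. This is sound: if $f \le g$ pointwise on an index set $S$, then $\inf_S f \le \inf_S g$. Combining the two bounds squeezes $\liminf$ and $\limsup$ against $\dim M$.

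One small point of phrasing: you present the identity $\dim_i M = \inf_{\widetilde M} \dim_i \widetilde M$ as something to be ``shown'' using (SMod3). In fact \eqref{liext} is the \emph{definition} of the extension to finitely generated modules, so nothing needs to be shown there; the role of (SMod3) is only to verify that this extension is consistent with the original value when $M$ happens to be finitely presented. This does not affect the validity of the argument, only the exposition.
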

\begin{exa}\label{example}
Here are some examples of Sylvester module rank functions that will appear in this paper. 
\begin{enumerate}
\item[(a)] Let $\Gamma$ be a group and $K$ a field. Then for any normal subgroup $N$ of finite index in $\Gamma$ and any finitely presented $K[\Gamma]$-module $M$, we can define $$\dim_{K[\Gamma/N]}M=\frac{\dim_K K\otimes_{K[N]} M}{|\Gamma:N|}.$$

\item[(b)] Let $f:R\to S$ be a homomorphism of rings and $\dim \in \mathbb P(S)$. Then we can define the induced Sylvester module rank function $\dim^f\in  \mathbb P(R)$: for every finitely presented $R$-module $M$, $\dim^f M=\dim S\otimes_R M$. Normally $f$ will be clear from the context, and therefore we will write $\dim$ instead of $\dim^f $. For example, in this way the function 
$\dim_{K[\Gamma/N]}$ from part (a) induces the corresponding  Sylvester module rank function on $K[\Gamma]$. 

\item [(c)] Let $\Gamma$ be a residually finite group, $K$ a field of characteristic 0 and $\Gamma>N_1>N_2>\ldots$ a residual chain  of normal subgroups of 
finite index. Then by L\"uck's approximation (\cite[Theorem 0.1]{Lu94}, \cite[Theorem 1.3]{Ja19}) there exists $\lim_i \dim_{K[\Gamma/N_i]}\in \mathbb P(K[\Gamma])$, which is independent on the chain. We denote this limit by $\dim_{K[\Gamma]}$. When $K$ is a subfield of $\CC$, then $\dim_{K[\Gamma]}$ is called the von Neumann dimension of $K[\Gamma]$-modules. 

\item[(d)] In (c), we have defined the von Neumann dimension $\dim_{K[\Gamma]}$ ($K$ is a subfield of $\CC$) only for residually finite groups, but it can be defined more generally (see, for example, \cite[Chapter 1]{Luc02}). This is done by considering the von Neumann group algebra $\mathcal{N}(\Gamma)$, which possesses a canonical dimension function $\dim_{\mathcal{N}(\Gamma)} \in \mathbb{P}(\mathcal{N}(\Gamma))$. Then the Sylvester function $\dim_{K[\Gamma]}$ is induced by the embedding $K[\Gamma] \hookrightarrow \mathcal{N}(\Gamma)$. \end{enumerate}
\end{exa}

Let  $G$ be a   pro-$p$ group. We will now introduce another Sylvester module rank function $\dim_{\F_p[[G]]}$.  If $U$ is a normal open subgroup of $G$, then as in the example (a), we can define $\dim_{\F_p[G/U]}\in \mathbb P(\F_p[[G]])$. 

\begin{lem}\cite[Lemma 4.1]{BLLS14} \label{decreasing}
Let $G$ be a   pro-$p$ group and $U_1\le U_2$ two  normal open subgroup of $G$.   Then $\dim_{\F_p[G/U_1]}\le \dim_{\F_p[G/U_2]}$ viewed as elements of $\mathbb P(\F_p[[G]])$
\end{lem}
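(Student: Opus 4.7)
The plan is to unpack the definition of $\dim_{\F_p[G/U_i]}$ and reduce the statement to a standard fact about modules over the group algebra of a finite $p$-group. Recall from Example~\ref{example}(a) that, for a finitely presented $\F_p[[G]]$-module $M$, the induced Sylvester rank function is
\[
	\dim_{\F_p[G/U_i]}(M) \;=\; \frac{\dim_{\F_p}\!\bigl(\F_p \otimes_{\F_p[[U_i]]} M\bigr)}{|G:U_i|} \;=\; \frac{\dim_{\F_p} M_{U_i}}{|G:U_i|},
\]
where $M_{U_i}$ denotes the $U_i$-coinvariants. Since $M$ is finitely presented, $M_{U_i}$ is a finitely presented $\F_p[G/U_i]$-module, hence finite-dimensional over $\F_p$, so these quantities are finite. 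Using $|G:U_1| = |G:U_2|\cdot|U_2:U_1|$, the inequality $\dim_{\F_p[G/U_1]} \le \dim_{\F_p[G/U_2]}$ is equivalent to
\[
	\dim_{\F_p} M_{U_1} \;\le\; |U_2:U_1|\cdot \dim_{\F_p} M_{U_2}.
\]

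Next I would observe that $U_1$ acts trivially on $M_{U_1}$ by construction, so $M_{U_1}$ is naturally a module over the finite $p$-group $Q := U_2/U_1$, and in these terms $M_{U_2} = (M_{U_1})_{Q}$. Therefore it suffices to prove the following purely module-theoretic fact: for any finite $p$-group $Q$ and any finite-dimensional $\F_p[Q]$-module $V$,
\[
	\dim_{\F_p} V \;\le\; |Q|\cdot \dim_{\F_p} V_Q.
\]

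Finally, this inequality follows from Nakayama's lemma. The group algebra $\F_p[Q]$ is a finite-dimensional local ring whose maximal ideal is the augmentation ideal $I$, with residue field $\F_p$; in particular $V_Q = V/IV$. Nakayama's lemma then says that $V$ is generated as an $\F_p[Q]$-module by any lift of an $\F_p$-basis of $V/IV$, i.e.\ by $d := \dim_{\F_p} V_Q$ elements. Picking such a generating set yields a surjection $\F_p[Q]^d \twoheadrightarrow V$, giving $\dim_{\F_p} V \le d\cdot|Q|$ as required.

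There is no real obstacle here: once the definition is unwound and the coinvariants are reorganized as a two-step process along $U_1 \le U_2$, the claim reduces to the basic observation that the augmentation ideal of $\F_p[Q]$ is the Jacobson radical, which is the content of Nakayama's lemma for local rings.
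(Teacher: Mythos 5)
Your proof is correct, and since the paper cites \cite[Lemma 4.1]{BLLS14} without reproducing the argument, there is no in-text proof to compare against; your argument is the standard one. The two key reductions are both valid: unwinding the induced rank functions from Example~\ref{example}(a)--(b) gives $\dim_{\F_p[G/U_i]}(M) = \dim_{\F_p}(M_{U_i})/|G:U_i|$, and the identification $M_{U_2} = (M_{U_1})_Q$ for $Q = U_2/U_1$ follows from transitivity of coinvariants (since $U_1 \unlhd U_2$, both being normal in $G$). The final step is exactly the point: $\F_p[Q]$ is local with maximal ideal the augmentation ideal because $Q$ is a finite $p$-group (the augmentation ideal is nilpotent in characteristic $p$), so Nakayama gives a generating set of size $\dim_{\F_p} V_Q$ and hence $\dim_{\F_p} V \le |Q| \cdot \dim_{\F_p} V_Q$. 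Note that the hypothesis that $G$ is pro-$p$ (equivalently, that $Q$ is a $p$-group) is used precisely and only at this last step; the statement fails for general finite $Q$, e.g.\ a nontrivial $\F_p[Q]$-module $V$ with $V_Q = 0$.
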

Let $G>U_1>U_2>\ldots$ be a residual chain of normal open subgroups of $G$. By Lemma \ref{decreasing}, there exists $\lim_i \dim_{\F_p[G/U_i]}\in \mathbb P(\F_p[[G]])$, which is independent on the chain. We denote this limit by $\dim_{\F_p[[G]]}$. An alternative way to define $\dim_{\F_p[[G]]}$ is by using infimum: for a finitely presented  $\F_p[[G]]$-module $M$ we have that
\begin{equation}\label{infimum}
\dim_{\F_p[[G]]} M=\inf_{U\lhd_o G} \dim_{\F_p[G/U]} M.
\end{equation}
Combining Lemma \ref{decreasing} with Proposition \ref{limitfg} we also obtain the following result.
\begin{pro}\label{modpfg}
Let $G$ be a   pro-$p$ group and $G>U_1>U_2>\ldots$  a residual chain of normal open subgroups of $G$.  Let  $M$  be a finitely generated $\F_p[[G]]$-module and $N$   a closed normal subgroup of  $G$. Then
\begin{enumerate}
\item $\dim_{\F_p[[G]]} M=\displaystyle \lim_{i\to \infty} \dim_{\F_p[G/U_i]} M$ and
\item $\dim_{\F_p[[G]]} M \le \dim_{\F_p[[G/N]]} M$, where  $\dim_{\F_p[[G]]} $ and $ \dim_{\F_p[[G/N]]}$ represent elements of  $\mathbb P(\F_p[[G]]).$
\end{enumerate}
 \end{pro}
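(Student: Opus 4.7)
The plan is to deduce both assertions essentially mechanically from the definition of $\dim_{\F_p[[G]]}$ together with the infimum description \eqref{infimum} and Proposition \ref{limitfg}.

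For (1), the residual chain is decreasing, so Lemma \ref{decreasing} yields that the sequence $\bigl(\dim_{\F_p[G/U_i]}\bigr)_i$ is monotonically non-increasing in $\mathbb{P}(\F_p[[G]])$, and by definition its limit is $\dim_{\F_p[[G]]}$. Combining this with \eqref{infimum} one sees that $\dim_{\F_p[[G]]}\le \dim_{\F_p[G/U_i]}$ for every $i$ on finitely presented modules. This matches the hypothesis of Proposition \ref{limitfg}, which therefore gives $\dim_{\F_p[[G]]} M = \lim_i \dim_{\F_p[G/U_i]} M$ for every finitely generated $\F_p[[G]]$-module $M$.

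For (2), I would first treat a finitely presented $M$ via \eqref{infimum}. Open normal subgroups $\overline V$ of $G/N$ correspond bijectively to open normal subgroups $V$ of $G$ containing $N$; for such $V$, transitivity of coinvariants produces a canonical isomorphism
\[
	\F_p\otimes_{\F_p[V]} M \;\cong\; \F_p\otimes_{\F_p[V/N]}\bigl(\F_p[[G/N]]\otimes_{\F_p[[G]]} M\bigr),
\]
which, together with $|G:V|=|G/N:V/N|$, shows that $\dim_{\F_p[G/V]} M = \dim_{\F_p[(G/N)/(V/N)]}\bigl(\F_p[[G/N]]\otimes_{\F_p[[G]]} M\bigr)$. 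Thus $\dim_{\F_p[[G]]} M$ is the infimum of the quantities $\dim_{\F_p[G/U]} M$ over \emph{all} open normal subgroups $U$ of $G$, whereas $\dim_{\F_p[[G/N]]} M$ is the infimum over the sub-collection of those containing $N$; this yields $\dim_{\F_p[[G]]} M\le \dim_{\F_p[[G/N]]} M$.

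To extend (2) to a finitely generated $M$, both sides are, by \eqref{liext}, the infima over all finitely presented $\widetilde M$ surjecting onto $M$ of the corresponding values (on the right one uses that $\F_p[[G/N]]\otimes_{\F_p[[G]]} -$ preserves surjections), so the finitely presented inequality passes to these infima. I do not anticipate any serious obstacle: the only points requiring some attention are the transitivity of coinvariants in the displayed formula and the verification that $\dim_{\F_p[[G/N]]}$ restricted to $\F_p[[G]]$ via the quotient map is indeed a Sylvester rank function on $\F_p[[G]]$, both of which are standard.
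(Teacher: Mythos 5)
Your proof is correct and follows essentially the same route the paper indicates (it simply states that the proposition follows by combining Lemma~\ref{decreasing} with Proposition~\ref{limitfg}, leaving the details implicit). You have correctly filled in those details: monotonicity from Lemma~\ref{decreasing} yields the hypothesis of Proposition~\ref{limitfg} for part~(1), and the infimum description~\eqref{infimum} together with the bijection between open normal subgroups of $G/N$ and open normal subgroups of $G$ containing $N$ gives part~(2), first for finitely presented modules and then for finitely generated ones via~\eqref{liext}.
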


Given a finitely generated group $\Gamma$, let $I_{\Z[\Gamma]}$ denote the augmentation ideal of $\Z[\Gamma]$. 
The  first $L^2$-Betti number of $\Gamma$ is defined as $$b_1^{(2)}(\Gamma, \Q)=\dim_{\mathcal N(\Gamma)}{\rm H}_1(\Gamma; \mathcal N(\Gamma))=\dim_{\mathcal N(\Gamma)} (\mathcal N(\Gamma)\otimes_{\Z[\Gamma]} I_{\Z[\Gamma]}) -1+\frac 1{|\Gamma]}.$$ In view of  Example \ref{example} (d) we have an equivalent definition
\begin{equation} \label{l2} b_1^{(2)}(\Gamma, \Q)=\dim_{\Q[\Gamma]} I_{\Z[\Gamma]}-1+\frac 1{|\Gamma]}.\end{equation}
Let $\Gamma_{\hat p}$ be the pro-$p$ completion of $\Gamma$. We can also consider $\dim_{\F_p[[\Gamma_{\hat p}]]}$ as an element of $\mathbb P(\Z[\Gamma])$, and if $\Gamma$ is residually-$p$,  define the first mod-$p$ $L^2$-Betti number of $\Gamma$ as
$$b_1^{(2)}(\Gamma, \F_p)=\dim_{\F_p[[\Gamma_{\hat p}]]}I_{\Z[\Gamma]}-1+\frac 1{|\Gamma]}.$$

\subsection{Gradient of pro-$p$ completion and the  first mod-$p$ $L^2$-Betti number}

For a pro-$p$ group $G$, let $I_{\F_p[[G]]}$ denote the augmentation ideal of the completed group algebra $\F_p[[G]]$.  The following lemma express the rank gardient of a pro-$p$ group in terms of  a rank of the module $I_{\F_p[[G]]}$.

\begin{lem}\label{rankgr}
Let $G$ be a finitely generated pro-$p$ group. 
Then  $$\rg(G)=\dim_{\F_p[[G]]} I_{\F_p[[G]]}-1+ \frac{1}{|G|}.$$ 
In particular,  if $\Gamma$ is a finitely generated residually-$p$ group, 
then $b_1^{(2)}(\Gamma, \mathbb{F}_p) = \operatorname{rg}(\Gamma_{\hat{p}})$.
\end{lem}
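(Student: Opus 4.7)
The plan is to compute $\dim_{\F_p[G/U]} I_{\F_p[[G]]}$ for each $U\lhd_o G$ and then pass to the limit along a residual chain using Proposition~\ref{modpfg}(1). The per-$U$ calculation rests on the observation that $\F_p[[G]]$ is a free left $\F_p[[U]]$-module of rank $|G:U|$ (a basis being any set of coset representatives), so in particular $\Tor_1^{\F_p[[U]]}(\F_p,\F_p[[G]])=0$.

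First I would apply $\F_p\otimes_{\F_p[[U]]}(-)$ to the short exact sequence $0 \to I_{\F_p[[G]]} \to \F_p[[G]] \to \F_p \to 0$. The $\Tor$ long exact sequence collapses to
\[
	0 \longrightarrow H_1(U;\F_p) \longrightarrow \F_p\otimes_{\F_p[[U]]} I_{\F_p[[G]]} \longrightarrow \F_p[G/U] \longrightarrow \F_p \longrightarrow 0,
\]
and counting $\F_p$-dimensions gives $d(U)+|G:U|-1$ for the middle term, so $\dim_{\F_p[G/U]} I_{\F_p[[G]]} = 1 + (d(U)-1)/|G:U|$. Since $I_{\F_p[[G]]}$ is finitely generated (topologically by $d(G)$ elements), Proposition~\ref{modpfg}(1) lets me take the limit along any residual chain $G>U_1>U_2>\cdots$ and obtain
\[
	\dim_{\F_p[[G]]} I_{\F_p[[G]]} = 1 + \lim_{i} \frac{d(U_i)-1}{|G:U_i|} = 1 + \rg(G) - \tfrac{1}{|G|},
\]
interpreting $1/|G|$ as $0$ when $G$ is infinite. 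Rearranging yields the first assertion.

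For the \textit{in particular} clause, put $G := \Gamma_{\hat p}$. Since $\Gamma$ is residually-$p$, $\Gamma \hookrightarrow G$ and $|\Gamma|=|G|$, so it suffices to show $\dim_{\F_p[[G]]} I_{\Z[\Gamma]} = \dim_{\F_p[[G]]} I_{\F_p[[G]]}$. Writing $\Gamma_U := \Gamma\cap U$ and using that $\Z[\Gamma]$ is a free $\Z[\Gamma_U]$-module and $\Gamma/\Gamma_U \cong G/U$ (by density), the parallel $\Tor$-argument with $\F_p\otimes_{\Z[\Gamma_U]}(-)$ applied to $0\to I_{\Z[\Gamma]} \to \Z[\Gamma] \to \Z \to 0$ yields
\[
	\dim_{\F_p[G/U]} I_{\Z[\Gamma]} = 1 + \frac{\dim_{\F_p} H_1(\Gamma_U;\F_p)-1}{|G:U|}.
\]
The main (and essentially only) obstacle is to identify $\dim_{\F_p} H_1(\Gamma_U;\F_p) = d(U)$, which I would derive by showing that the natural map $(\Gamma_U)_{\hat p} \to U$ is an isomorphism. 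Surjectivity follows from compactness and density of the image. For injectivity, given any $N\lhd \Gamma_U$ with $[\Gamma_U:N]$ a $p$-power, I would form $N^* := \bigcap_{\gamma\in\Gamma}\gamma N\gamma^{-1}$; since $[\Gamma:\Gamma_U]$ is a $p$-power, $N^*$ is a finite intersection of subgroups of $p$-power index in $\Gamma$, hence normal in $\Gamma$ of $p$-power index. Its closure $V := \overline{N^*}$ is open normal in $G$, contained in $U$ (as $N^*\le U$ and $U$ is closed), and satisfies $\Gamma\cap V = N^* \le N$; this exhibits $\Gamma_U/N$ as a quotient of the finite $p$-group $U/V$, proving injectivity. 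Hence $H_1(\Gamma_U;\F_p) \cong H_1(U;\F_p)$, and taking limits along the chain $\{U_i\}$ delivers $\dim_{\F_p[[G]]} I_{\Z[\Gamma]} = \dim_{\F_p[[G]]} I_{\F_p[[G]]}$ and concludes the proof.
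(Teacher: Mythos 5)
Your approach is essentially the paper's: tensor the augmentation-ideal sequence with $\F_p$ over the completed group algebra of an open normal subgroup, read off the $\F_p$-dimensions, and pass to the limit via Proposition~\ref{modpfg}(1); you simply make explicit the identification $(\Gamma\cap U)_{\hat p}\cong U$ that the paper's ``as before'' tacitly invokes to replace $H_1(\Gamma\cap U_i;\F_p)$ by $H_1(U_i;\F_p)$ in the \emph{in particular} clause.

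One caveat on the justification in the injectivity step: the implication ``$N^*=\bigcap_{\gamma\in\Gamma}\gamma N\gamma^{-1}$ is a finite intersection of subgroups of $p$-power index, hence has $p$-power index'' is not valid in general (in $S_3$ the three Sylow $2$-subgroups have index $3$, yet intersect trivially, and $6$ is not a power of $3$). The conclusion is nevertheless correct here for a slightly different reason: each conjugate $\gamma N\gamma^{-1}$ is contained in the \emph{normal} subgroup $\Gamma_U=\Gamma\cap U$ and is normal in it of $p$-power index, so $\Gamma_U/N^*$ embeds into a finite product of $p$-groups and is thus a $p$-group, whence $[\Gamma:N^*]=[\Gamma:\Gamma_U]\,[\Gamma_U:N^*]$ is a $p$-power. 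With this repair the argument is complete and matches the paper's route.
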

\begin{proof} The case where $G$ is finite is clear. Assume $G$ is infinite. Let  $G>U_1>U_2>\ldots $ be a residual chain of normal open subgroups of $G$. 
Consider the exact sequence
$$0\to I_{\F_p[[G]]}\to \F_p[[G]]\to \F_p\to 0.$$
After applying $\F_p\otimes_{\F_p[[U_i]]}$ we obtain the exact sequence
$$0\to {\rm H}_1(U_i;\F_p)\to \F_p\otimes_{\F_p[[U_i]]}I_{\F_p[[G]]}\to \F_p[G/U_i]\to \F_p\to 0.$$
Therefore we obtain  $$\dim_{\F_p[G/U_i]} I_{\F_p[[G]]}-1=\frac{\dim_{\F_p} {\rm H}_1(U_i;\F_p)}{|G:U_i|}-\frac{1}{|G:U_i|}.$$
  Hence, by Proposition \ref{modpfg}, $$\dim_{\F_p[[G]]} I_{\F_p[[G]]}-1=\rg(G).  $$
  
  Now, let $\Gamma$ be a finitely generated residually-$p$ group. We put $G=\Gamma_{\hat{p}}$ and let $G>U_1>U_2>\ldots $ be a residual chain of normal open subgroups of $G$. Then as before we have that
  $$\dim_{\F_p[G/U_i]} I_{\F_p[\Gamma]}-1=\frac{\dim_{\F_p} {\rm H}_1(U_i;\F_p)}{|G:U_i|}-\frac{1}{|G:U_i|},  $$
   and so $b_1^{(2)}(\Gamma, \mathbb{F}_p) = \operatorname{rg}(G)$.
\end{proof}

The following lemma shows that we can control  $\dim_{\F_p[[G]]} \F_p[[G]]/\F_p[[G]](g-1)$.

\begin{lem}\label{boundpower}
Let $G$ be a countably based pro-$p$ group and $g\in G$ of order $p^k$. Then $\dim_{\F_p[[G]]} \F_p[[G]]/\F_p[[G]](g-1) =\frac  1{p^k}.$ \end{lem}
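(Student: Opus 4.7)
The plan is to compute the dimension via the limit formula from Proposition \ref{modpfg} applied to a residual chain of open normal subgroups of $G$ (which exists because $G$ is countably based). First I would observe that $M := \F_p[[G]]/\F_p[[G]](g-1)$ is finitely presented, since it is the cokernel of the left $\F_p[[G]]$-linear map $\F_p[[G]] \to \F_p[[G]]$ given by right multiplication by $g-1$. Fixing a residual chain $G > U_1 > U_2 > \ldots$ of normal open subgroups, Proposition \ref{modpfg} then yields
\[
\dim_{\F_p[[G]]} M = \lim_{i \to \infty} \dim_{\F_p[G/U_i]} M.
\]

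Next I would compute each $\dim_{\F_p[G/U_i]} M$ explicitly. Writing $\bar g_i$ for the image of $g$ in the finite $p$-group $G/U_i$ and passing to $U_i$-coinvariants, the one-relator presentation of $M$ descends to
\[
\F_p \otimes_{\F_p[[U_i]]} M \;\cong\; \F_p[G/U_i]\big/\F_p[G/U_i](\bar g_i - 1).
\]
The key algebraic point is that $\F_p[G/U_i]$ is free of rank $|G:U_i|/|\langle \bar g_i\rangle|$ as a right module over the (commutative) subring $\F_p[\langle \bar g_i\rangle]$, and the augmentation ideal $(\bar g_i-1)\F_p[\langle \bar g_i\rangle]$ has codimension $1$ in $\F_p[\langle \bar g_i\rangle]$. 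Hence $\dim_{\F_p}(\F_p \otimes_{\F_p[[U_i]]} M) = |G:U_i|/|\langle \bar g_i\rangle|$, and therefore $\dim_{\F_p[G/U_i]} M = 1/|\langle \bar g_i\rangle|$.

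Finally, since $g$ has order exactly $p^k$ and $\bigcap_i U_i = \{1\}$, the nontrivial element $g^{p^{k-1}}$ lies outside $U_i$ for all sufficiently large $i$, which forces $|\langle \bar g_i\rangle| = p^k$. Passing to the limit then gives $\dim_{\F_p[[G]]} M = 1/p^k$, as desired. The only subtle point is bookkeeping with the module structures: one must verify that right multiplication by $g-1$ really descends to right multiplication by $\bar g_i - 1$ on $\F_p[G/U_i]$, which relies on $I_{U_i}\F_p[[G]]$ being a two-sided ideal (a consequence of the normality of $U_i$). Everything else is a direct calculation.
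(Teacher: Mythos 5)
Your proof is correct and follows essentially the same route as the paper: fix a residual chain, apply Proposition~\ref{modpfg} to reduce to the finite quotients $G/U_i$, and compute $\dim_{\F_p[G/U_i]}$ of the one-relator module as $1/|\langle \bar g_i\rangle|$, which stabilizes at $1/p^k$. The only difference is presentational — you spell out the freeness of $\F_p[G/U_i]$ over $\F_p[\langle\bar g_i\rangle]$ and the right-exactness bookkeeping, whereas the paper states the equality $\dim_{\F_p[G/U_i]} M = |G:\langle g\rangle U_i|/|G:U_i|$ without elaboration.
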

\begin{proof}
Let  $G>U_1>U_2>\ldots $ be a residual chain of normal open subgroups of $G$. Without loss of generality we can assume that the order of the image of $g$ in $\F_p[G/U_1]$ (and so in $\F_p[G/U_i]$ for all $i$) is $p^k$. Then we obtain that 
\begin{multline*}
\dim_{\F_p[[G]]} \F_p[[G]]/\F_p[[G]](g-1)=\\ \lim_{i\to \infty}\dim_{\F_p[G/U_i]} \F_p[[G]]/\F_p[[G]](g-1) =\lim_{i\to \infty} \frac{ |G:\langle g\rangle U_i|}{|G:U_i|}\\  \lim_{i\to \infty}
\frac{ 1}{|\langle g\rangle U_i:U_i|}=\lim_{i\to \infty}\frac 1{p^k}=\frac 1{p^k}.\qedhere \end{multline*}
\end{proof}

Now we are ready to prove Propostion  \ref{rankgradient}.
\begin{proof}[Proof of Propostion  \ref{rankgradient}]
If the order of $g$ is less than $p^k$, then $G=\overline G$, and so the claim of the theorem follows. If $p^k$ divides the order of $g$ (the order of an element of infinite order in a pro-$p$ group is $p^{\infty}$), then there exists a finite quotient $P$ of $G$, such that the order of the image of $g$ in $P$ is exactly $p^k$. Hence $P$ is also a quotient of $\overline G$, and so
the order of $\overline g$, the image of  $g$ in $\overline G$, is exactly $p^k$.

Let $\widehat \otimes$ denote the profinite tensor product \cite[Chapter 5]{RZ10}. Consider the exact sequence
$$0\to I_{\F_p[[G]]}\to \F_p[[G]]\to \F_p\to 0.$$
After applying $\F_p[[\overline G]]\widehat \otimes_{\F_p[[G]]}$, we obtain the exact sequence
$$0\to {\rm }H_1(G,\F_p[[\overline G]])\to  \F_p[[\overline G]]\widehat \otimes_{\F_p[[G]]} I_{\F_p[[G]]}\xrightarrow{\alpha} \F_p[[\overline G]]\to \F_p\to 0.$$
By Shapiro's Lemma \cite[Theorem 6.10.8]{RZ10}, $$H_1(G,\F_p[[\overline G]])\cong H_1(N,\F_p)\cong N/\overline{[N,N]N^p}.$$ Since $N$ is the normal closed subgroup generated by $g^{p^k}$, 
the $\F_p[[\overline G]]$-module $ N/\overline{[N,N]N^p}$ is a quotient of $\F_p[[\overline G]]/\F_p[[\overline G]](\overline g-1)$. Taking into account that $\im \alpha=I_{\F_p[[\overline G]]}$, we obtain an exact sequence
$$\F_p[[\overline G]]/\F_p[[\overline G]](\overline g-1)\to  \F_p[[\overline G]]\widehat \otimes_{\F_p[[G]]} I_{\F_p[[G]]}  \to I_{\F_p[[\overline G]]}\to 0.$$
 Let  $G>U_1>U_2>\ldots $ be a residual chain of normal open subgroups of $G$.  By Proposition \ref{modpfg}, 
\begin{multline*}
\dim_{\F_p[[\overline G]]}  \F_p[[\overline G]]\widehat \otimes_{\F_p[[G]]} I_{\F_p[[G]]} \myeq{Prop. \ref{modpfg} (1)}\\
\lim_{i\to \infty} \dim_{\F_p[G/NU_i]} \F_p[G/NU_i]\otimes_{\F_p[[\overline G]]} (\F_p[[\overline G]]\widehat \otimes_{\F_p[[G]]} I_{\F_p[[G]]})=\\
\lim_{i\to \infty} \dim_{\F_p[  G/NU_i]} \F_p[G/NU_i]\otimes_{\F_p[[ G]]}  I_{\F_p[[G]]}\myeq{Prop. \ref{modpfg} (1)}\\\dim_{\F_p[[\overline G]]}  I_{\F_p[[G]]} \myge{Prop. \ref{modpfg} (2)} 
\dim_{\F_p[[ G]]}  I_{\F_p[[G]]}
\end{multline*}
On the other hand, by Lemma \ref{boundpower}, 
$$\dim_{\F_p[[\overline G]]}\F_p[[\overline G]]/\F_p[[\overline G]](\overline g-1)=\frac 1{p^k}.$$
Hence, from (SMod3), we obtain that
$$\dim_{\F_p[[\overline G]]} I_{\F_p[[\overline G]]}\ge \dim_{\F_p[[ G]]}  I_{\F_p[[G]]}-\frac 1{p^k}.$$
By Lemma \ref{rankgr} this means that  $\rg(\overline G)\ge \rg(G)-\frac 1 {p^k}$.
 \end{proof}

Proposition  \ref{rankgradient} implies the following result of Schlage-Puchta \cite{SP12}.
 
\begin{cor} \cite[Theorem 1]{SP12} Let $d\ge 2$ be an integer, $p$ a prime, and $\epsilon>0$. Then there exists a
$d$-generated   $p$-group $\Gamma$ with $\rg(\Gamma_{\hat p})\ge d-1-\epsilon$.
\end{cor}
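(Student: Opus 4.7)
The plan is to build $\Gamma$ as a countably presented torsion quotient of the free group $F_d$ whose pro-$p$ completion we can control via Proposition \ref{rankgradient}. Enumerate the elements of $F_d$ as $\{f_i\}_{i\ge 1}$ and pick integers $k_i$ growing fast enough that $\sum_{i\ge 1} p^{-k_i} < \epsilon$, then set
\[
\Gamma = F_d/\normal{f_i^{p^{k_i}} : i\ge 1}.
\]
Every element of $\Gamma$ is represented by some $f_i$ and hence has order dividing $p^{k_i}$, so $\Gamma$ is a $d$-generated abstract $p$-group; the remaining task is to bound $\rg(\Gamma_{\hat p})$ from below.

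Set $G_n = F_{\hat p}/\overline{\normal{f_1^{p^{k_1}},\ldots,f_n^{p^{k_n}}}}$, so $G_0 = F_{\hat p}$ has rank gradient $d-1$ and $\Gamma_{\hat p} = \varprojlim_n G_n$. Since each step $G_{n-1}\twoheadrightarrow G_n$ collapses a single $p^{k_n}$-th power, iterating Proposition \ref{rankgradient} gives
\[
\rg(G_n) \ge d - 1 - \sum_{i=1}^n p^{-k_i}.
\]

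The main obstacle is transferring this bound to the inverse limit, because rank gradient can drop under pro-$p$ quotients. I would therefore mimic the Sylvester-rank-function proof of Proposition \ref{rankgradient} directly for $F_{\hat p}\twoheadrightarrow \Gamma_{\hat p}$. By Lemma \ref{rankgr}, $\rg(\Gamma_{\hat p}) = \dim_{\F_p[[\Gamma_{\hat p}]]} I_{\F_p[[\Gamma_{\hat p}]]}-1$; tensoring the augmentation sequence of $\F_p[[F_{\hat p}]]$ by $\F_p[[\Gamma_{\hat p}]]\widehat\otimes_{\F_p[[F_{\hat p}]]}-$ and using Shapiro's lemma yields the exact sequence
\[
H_1(\overline R,\F_p)\to \F_p[[\Gamma_{\hat p}]]\widehat\otimes_{\F_p[[F_{\hat p}]]} I_{\F_p[[F_{\hat p}]]}\to I_{\F_p[[\Gamma_{\hat p}]]}\to 0,
\]
where $\overline R$ is the closed normal closure of all the $f_i^{p^{k_i}}$ in $F_{\hat p}$. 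Proposition \ref{modpfg} bounds the middle term from below by $d$, and $H_1(\overline R,\F_p)$ is generated as an $\F_p[[\Gamma_{\hat p}]]$-module by the images of the relators, each lying in a cyclic submodule annihilated by $\overline{f_i}-1$ and so, by Lemma \ref{boundpower}, of dimension at most $p^{-o_i}$, where $p^{o_i}$ is the order of $\overline{f_i}$ in $\Gamma_{\hat p}$.

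The delicate point, where I expect the bulk of the work to lie, is arranging $o_i = k_i$ so that $\dim_{\F_p[[\Gamma_{\hat p}]]} H_1(\overline R,\F_p)\le \sum p^{-k_i}<\epsilon$; combined with (SMod3) this would yield $\rg(\Gamma_{\hat p})\ge d-1-\epsilon$. A priori one only knows $o_i\le k_i$, since later relators could collapse the order of $\overline{f_i}$ further. The natural remedy is to construct the relators adaptively: at stage $n$, choose $k_n$ no larger than the $p$-adic exponent of the order of $f_n$ in $G_{n-1}$, so that by the argument of Proposition \ref{rankgradient} the image of $f_n$ has order exactly $p^{k_n}$ in $G_n$; with careful bookkeeping — for instance, periodically revisiting each earlier $f_j$ to verify its order has not dropped — one can arrange that $o_i=k_i$ holds in the inverse limit, completing the proof.
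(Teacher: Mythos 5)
Your construction of $\Gamma$ is identical to the paper's, but the argument for bounding $\rg(\Gamma_{\hat p})$ diverges from the paper's at the crucial point, and the route you take has a genuine gap. You want to apply the Sylvester-rank machinery of Lemma~\ref{rankgr}, Lemma~\ref{boundpower} and Proposition~\ref{modpfg} directly to the full relation module $H_1(\overline R;\F_p)$ of $F_{\hat p}\twoheadrightarrow\Gamma_{\hat p}$, where $\overline R$ is the closed normal closure of all infinitely many relators. But $H_1(\overline R;\F_p)$ is only countably generated as an $\F_p[[\Gamma_{\hat p}]]$-module, and the extension~(\ref{liext}) of a Sylvester module rank function, together with (SMod3), is only established for \emph{finitely generated} modules. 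Passing from "$\dim$ of each finitely generated submodule $\le\sum_{i\le n}p^{-k_i}$" to "$\dim H_1(\overline R;\F_p)\le\sum_i p^{-k_i}$" would require a countable-subadditivity or continuity property of $\dim_{\F_p[[\Gamma_{\hat p}]]}$ that the paper neither establishes nor uses (the paper explicitly declines to invoke Li's extension to arbitrary modules). Without that, the step $\dim I_{\F_p[[\Gamma_{\hat p}]]}\ge d-\dim H_1(\overline R;\F_p)$ is not justified. Separately, the "adaptive construction" you sketch to force $o_i=k_i$ is unnecessary: if $\overline{f_i}$ has order $p^{o_i}<p^{k_i}$ in $\Gamma_{\hat p}$, then $f_i^{p^{k_i}}=(f_i^{p^{o_i}})^{p^{k_i-o_i}}$ lies in $\overline R^p$ and so contributes $0$ to $\overline R/[\overline R,\overline R]\overline R^p$; that generator is simply redundant, and the naive bound $\sum p^{-k_i}$ still applies to the remaining ones. (Also, $\Gamma_{\hat p}$ is $F_{\hat p}/\overline R$, not $\varprojlim_n G_n$ as you state, though you don't really use that claim.)

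The paper avoids the infinite-relation issue entirely via the formula~(\ref{infimum2}): to lower-bound $\rg(\Gamma_{\hat p})$ it suffices to bound $\frac{\dim_{\F_p}H_1(N;\F_p)}{|\Gamma:N|}$ for a single finite-index normal subgroup $N\lhd\Gamma$ at a time. Writing $\widetilde N=\pi^{-1}(N)\le F$, the subgroup $[\widetilde N,\widetilde N]\widetilde N^p$ has finite index in $F$, so all but finitely many $f_i^{p^{k_i}}$ already lie in it; hence $H_1(N;\F_p)$ agrees with the corresponding homology computed inside $\Gamma_n$ for $n$ large, and one only needs Proposition~\ref{rankgradient} applied a \emph{finite} number of times. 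If you want to salvage your approach, the cleanest repair is essentially to re-derive this finite reduction: fix an open normal $U\le\Gamma_{\hat p}$, note that only finitely many relators contribute to $\dim_{\F_p[\Gamma_{\hat p}/U]}$ of the relation module, take the infimum over $U$ at the end. But at that point you have reproduced the paper's argument.
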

\begin{proof}
Let $F$ be a free group of rank $d$. List all non-trivial elements of $F$: $\{f_1,f_2,f_3,\ldots \}$. Take $k_i\in \N$ such that 
$\displaystyle \sum_{i=1}^{\infty} \frac 1{p^{k_i}}\le \epsilon$. We put $$\Gamma_n=F/\langle \langle f_1^{p^{k_1}},\ldots, f_k^{p^{k_n}} \rangle \rangle \textrm{\ and\ }\Gamma=F/\langle \langle f_i^{p^{k_i}}| i\in \N \rangle \rangle .$$ It is clear that $\Gamma$ is a $p$-group. We claim that $\rg(\Gamma_{\hat p})\ge d-1-\epsilon$.

In view of (\ref{infimum2}) it is enough to show that if $N$ is a normal subgroup of $\Gamma$ of finite index (which is a power of $p$ because $\Gamma$ is a $p$-group), then $\frac{\dim_{\F_p} {\rm H}_1(N;\F_p)}{|\Gamma:N|}\ge d-1-\epsilon$. Denote by $\pi:F\to \Gamma$ and $\pi_n:F\to \Gamma_n$ the canonical maps and let $\widetilde N=\pi^{-1}(N)$. Since $ [\widetilde N, \widetilde N]\widetilde N^p$ is of finite index in $F$, there exists $n\in \N$ such that $f_i^{p^{k_i}}\in [\widetilde N, \widetilde N]\widetilde N^p$ for all $i>n$. Therefore,
$$\frac{\dim_{\F_p} {\rm H}_1(N;\F_p)}{|\Gamma:N|}=\frac{\dim_{\F_p} {\rm H}_1(\pi_n(\widetilde N);\F_p)}{|\Gamma_n:\pi_n(\widetilde N)|}\ge \rg((\Gamma_n)_{\hat p}).$$
By Proposition  \ref{rankgradient}, $\rg((\Gamma_n)_{\hat p})\ge d-1-\displaystyle \sum_{i=1}^n \frac 1{p^{k_i}}\ge  d-1-\epsilon$.
\end{proof}

We finish the section by  proving Propostion  \ref{EL}.

\begin{proof}[Proof of Proposition \ref{EL}]
By the formula (\ref{l2}), 
$b_1^{(2)}(\Gamma, \Q)=\dim_{\Q[\Gamma]} I_{\Z[\Gamma]}-1+\frac 1{|\Gamma]}$ and by Lemma  \ref{rankgr}
$\rg(\Gamma_{\hat p})=b_1^{(2)}(\Gamma, \F_p)=\dim_{\F_p[[\Gamma_{\hat p}]]}I_{\Z[\Gamma]}-1+\frac 1{|\Gamma]}.$ 

Let  $\Gamma_{\hat p}>U_1>U_2>\ldots $ be a residual chain of normal open subgroups of $\Gamma_{\hat p}$.  Put $N_i=\Gamma \cap U_i$.
Then, from the definitions of $\dim_{\Q[\Gamma]}$ and $\dim_{\F_p[[\Gamma_{\hat p}]]}$ as elements of $\mathbb P(\Z[\Gamma])$, we obtain
$$
\dim_{\Q[\Gamma]}=\lim_{i\to \infty} \dim_{\Q[\Gamma/N_i]}\le \lim_{i\to\infty} \dim_{\F_p[\Gamma/N_i]}=\dim_{\F_p[[\Gamma_{\hat p}]]}.
$$

Thus, $\dim_{\Q[\Gamma]} I_{\Z[\Gamma]}\le \dim_{\F_p[[\Gamma_{\hat p}]]}I_{\Z[\Gamma]}$, and so $b_1^{(2)}(\Gamma, \Q)\le \rg (\Gamma_{\hat p})$.
\end{proof}
\bibliographystyle{amsalpha}
\bibliography{biblio}

\end{document}